\def\unit{\Eins}
\def\numberbysection{\@addtoreset{equation}{section}
         \renewcommand{\theequation}{\thesection.\arabic{equation}}}
\def\subsubsection{\@startsection{subsubsection}{3}%
  \normalparindent{.5\linespacing\@plus.7\linespacing}{-.5em}%
  {\normalfont\bfseries}}
\newtheorem{thm}{Theorem}[section]
\newtheorem{lem}[thm]{Lemma}
\newtheorem{prop}[thm]{Proposition}
\newtheorem{cor}[thm]{Corollary}
\newtheorem{claim}[thm]{Claim}
\theoremstyle{definition}
\newtheorem{df}[thm]{Definition}
\newtheorem{rmk}[thm]{Remark}
\newtheorem{nota}[thm]{Notation}
\newtheorem{ex}[thm]{Example}
\def\CalC{{\mathcal C}}
\def\CalE{{\mathcal E}}
\def\Agg{{\mathcal A}gg}
\def\Set{{\mathcal S}et}
\def\Top{{\mathcal T}op}
\def\Crl{{\mathcal C}rl}
\def\CCyclic{\mathfrak  {C}}
\def\operads{{\mathfrak O}}
\def\props{{\mathfrak P}}
\def\properads{{\mathfrak P}^{ctd}}
\def\dioperads{{\mathfrak D}}
\def\modular{{\mathfrak M}}
\def\F{\mathcal F}
\def\FF{\mathfrak F}
\def\GG{\mathfrak G}
\def\C{\CalC}
\def\Z{{\mathbb Z}}
\def\N{{\mathbb N}}
\def\O{{\mathcal O}}
\def\P{{\mathcal P}}
\def\SS{{\mathbb S}}
\def\odo{\otimes \cdots \otimes}
\def\V{\asts}
\def\asts{{\mathcal V}}
\def\F{\clusters}
\def\clusters{{\mathcal F}}
\def\isoclusters{Iso(\F)}
\def\opcat{{\mathcal O }ps}
\def\smodcat{{\mathcal M}ods}
\def\forget{\mathit {forget}}
\def\final{\mathcal T}
\def\oper{\mathcal{O}p}
\def\opers{\opcat}
\def\fops{\F\text{-}\opcat}
\def\fopsc{\F\text{-}\opcat_\C}
\def\vmodsc{\V\text{-}\smodcat_\C}
\newcommand{\ph}{\phi}
\newcommand{\Ob}{\text{Obj}}
\newcommand{\Iso}{\text{Iso}}
\newcommand{\Fe}{\mathfrak{F}}
\newcommand{\io}{\iota}
\newcommand{\Op}{\mathcal{O}}
\newcommand{\iopair}{\iota_{dec\Op}}
\newcommand{\Vpair}{\V_{dec\Op}}
\newcommand{\Vpairtwo}{\V'_{dec f_{\ast}(\Op)}}
\newcommand{\Fpair}{\F_{dec\Op}}
\newcommand{\Fpairtwo}{\F'_{dec f_{\ast}(\Op)}}
\newcommand{\Fepair}{\Fe_{dec\Op}}
\newcommand{\Fepairtwo}{\Fe'_{dec f_{\ast}(\Op)}}
\newcommand{\Vpairten}{\V_{dec\Op}^{\otimes}}
\newcommand{\iopairten}{\iota_{dec\Op}^{\otimes}}
\newcommand{\Comma}[2]{(#1 \downarrow #2)}
\def\FFdeco{\FF_{dec\O}}
\def\Fdeco{\F_{dec\O}}
\def\Vdeco{\V_{dec\O}}
\def\ideco{\iota_{dec\O}}
\def\Po{\P}
\begin{document}

\title[Decorated Feynman Categories]{Decorated Feynman Categories}

\author
[Ralph M.\ Kaufmann]{Ralph M.\ Kaufmann}
\email{rkaufman@math.purdue.edu}

\address{\textsc{Purdue University Department of Mathematics,
 West Lafayette, IN 47907}
 and Max--Planck--Institute f\"ur Mathematik, Bonn, Germany}

\author
[Jason Lucas]{Jason Lucas}
\email{lucas11@math.purdue.edu}

\address{Purdue University Department of Mathematics,
 West Lafayette, IN 47907}

\begin{abstract}
In \cite{feynman}, the new concept of Feynman categories was introduced to simplify the discussion of operad--like objects. In this present paper, we demonstrate the usefulness of this approach, by introducing the concept of decorated Feynman categories. The procedure takes a Feynman category $\mathfrak F$ and a functor $\mathcal O$ to a monoidal category to produce a new Feynman category ${\mathfrak F}_{dec {\mathcal O}}$. This in one swat explains the existence of non--sigma operads, non--sigma cyclic operads, and the non--sigma--modular operads of Markl as well as all the usual candidates simply from the category $\mathfrak G$, which is a full subcategory of the category of graphs of \cite{borisovmanin}. Moreover, we explain the appearance of terminal objects noted in \cite{marklnonsigma}. We can then easily extend this for instance to the dihedral case. Furthermore, we obtain graph complexes and all other known operadic type notions from decorating and restricting the basic Feynman category $\mathfrak G$ of aggregates of corollas. We additionally show that the construction is functorial. There are further geometric and number theoretic applications, which will follow in a separate preprint.
\end{abstract}

\maketitle

%\tableofcontents

\section*{Introduction}
In \cite{feynman}, Feynman categories were introduced as a universal foundational framework for treating operations and their relations as
they appear in algebra, geometry and physics.
Building on this, we now show how a pair of a Feynman category and
a functor on it gives rise to a new Feynman category.
This rather simple sounding operation has great consequences. It allows us to generate a whole new class of examples. Part of these examples are classic or recently discovered examples. For instance, we naturally obtain non-Sigma operads, non-Sigma cyclic operads and non-Sigma modular operads by the simple observation that the Associative operad exists. Using the plus construction of \cite{feynman} this existence can be viewed by the opetopic principle to actually even underlie the construction of the Feynman category for operads and is a universal example.
Another way to look at this is the reduction of all the species to simply the category $\GG$ which is a full subcategory of the category of graphs of \cite{borisovmanin}. Indeed all examples are obtained by restricting and decorating, as we present here.
The main motivation for us was to understand the calculations of \cite{hoch1,hoch2,ochoch} in a more categorical framework. 
This is also linked to extended field theory, as we shall discuss in a subsequent paper.
The theory surprisingly has direct applications to current new constructions in operad theory. For instance, one other upshot is the explanation of several coincidences observed in \cite{marklnonsigma}. This has a direct application to geometry and topology as noted in {\it loc.\ cit.}. The fact that the non-Sigma modular operads and the Dihedral modular operads actually encode geometry goes back to the classical theory of gluing surfaces from polygons as for instance described in great detail in \cite{munkres}, where one can also find when the result of a gluing is
 orientable or not. This is an instance of a special feature of terminal objects in the formalism we discuss in this paper. Given a sequence of words as describing a surface,  one can first consolidate all the words into one word. This non--self gluing is exactly the cyclic operad structure. Then one finally needs to self--glue. This is the modular operad structure. In the Feynman category language, which we set up here, this means that the inclusion functor is a minimal extension, which by definition means that a final $\oper$ exists and pushes forward to a final $\oper$. An even more interesting and richer theory appears, when one adds geometry to the subject as in \cite{KLP,KP,Dylan,ochoch}, or more combinatorics as in \cite{turaev1,turaev2}.  
  For instance using surface considerations \cite{KP}  contains the earlier definition of a brane labelled c/o system which 
 contains that of a non--sigma modular operad as a special case.
   
  The type of decoration we discuss here will have further application to our original problem of the categorical formulation of Hochschild actions. It will also be helpful in understanding different Hopf algebra constructions which appear in number theory and mathematical physics, such as in the work of Kreimer and Brown.

   In this paper, we will concentrate on the algebraic and categorical aspects, saving the other two aspects for a subsequent paper.
As an application, we  discuss in this paper the three geometries of Kontsevich, Com, Ass and Lie. This answers a question of Willwacher: indeed there is a Feynman category for the Lie case. As  
a further application, we explain the results of \cite{marklnonsigma} in our general framework and answer the question about Dihedral operads posed in that paper.

The three main results in this paper are: 
\begin{enumerate}

\item Theorem \ref{theorem1}, which states that the decorated Feynman category exists. That is given a Feynman category $\FF=(\V,\F,\imath)$ and a strict monoidal 
functor $\O\in Fun_{\otimes}(\F,\C)$ then there is a decorated Feynman category $\FFdeco=(\Vdeco,\Fdeco,\ideco)$.
The objects of $\Fdeco$ are pairs $(X,a_X)$ with $X$ an object of 
$\F$ and $a_X$ an element of $\O(X)$.

\item
Theorem \ref{diagramthm} which establishes the commutative squares (\ref{maindiageq}) which are natural in $\O$ (\ref{reldiageq})
\begin{equation}
\xymatrix{\Fepair \ar[r]^{f^{\Op}} \ar[d]_{forget} & \Fe'_{dec\, f_{\ast}(\Op)} \ar[d]^{forget'} \\
\Fe \ar[r]^f & \Fe'}
\quad
\xymatrix{\Fepair \ar[r]^{\sigma_{dec}} \ar[d]_{f^{\Op}} & \Fe_{dec\Po} \ar[d]^{f^{\Po}} \\
\Fe'_{decf_{\ast}(\Op)} \ar[r]^{\sigma'_{dec}}& \Fe'_{decf_{\ast}(\Po)} }
\end{equation}
On the categories of monoidal functors to $\C$, we get the induced diagram of adjoint functors. 
\begin{equation}
\xymatrix{\Fpair\text{-}\opcat \ar@/^/[r]^{f^{\Op}_*}\ar@/_/[d]_{forget_*} & \F'_{dec\,f_{\ast}(\Op)}\text{-}\opcat   \ar@/^/[l]^{f^{\Op }*}\ar@/^/[d]^{forget'_*} \\
\F\text{-}\opcat \ar@/^/[r]^{f_*}\ar@/_/[u]_{forget^*} & \F'\text{-}\opcat\ar@/^/[u]^{forget'^*}\ar@/^/[l]^{f^*} }
\end{equation}
\item
Finally, Theorem \ref{decothm} shows that the decoration can be recovered by the pushforward of a final functor. I.e.\ for 
$forget:\FFdeco\to \FF:forget_*(final)=\O$.
\end{enumerate}

The simple formulation given above works in the case that the target category is Cartesian. In the non--Cartesian case there is an
extra step needed in the constructions, which we also present. The theorems hold true analogously.
Finally, in the case that $\F$ is enriched and tensored over $\C$, we give  a categorically more highbrow way of defining the
decorated category. The extra work for the non-Cartesian case is necessary for the example of decoration by $Lie$.
For the other two geometries, based on the associative and commutative operads, the $\C$ is just $\Set$.

The paper is organized as follows. In the first section, we recall the definition of Feynman categories of \cite{feynman}.
In Section 2, we define $\FFdeco$, and in Section 3 prove the second result above.
Section 4 deals with terminal objects, and contains the third Theorem. It also contains the discussion of when terminal objects push forward and proves that if they push forward under $f_*$, then they do so also under $f^{\O}_*$.
In Section 5, we give the applications to the three geometries of Kontsevich and show that all the  examples of Feynman categories for the known operadic types
can be obtained by decorating the basic Feynman category $\GG$ and restricting to subcategories. This section also contains the full discussion of the application to the work and questions of Markl.

\section*{Acknowledgements}
The initial seed of the present paper was work with Yu Tsumurra and Chris Schommer--Pries. Many of the original ideas were formalized during a visit of RK to the Simons Center for Geometry and Physics, for which we thank the Simons foundation. The idea of applications to non--Sigma modular operads came while talking to Martin Markl about his work in Prague while RK was visiting the Academy of Sciences there, whom we also would like to thank. We also thank the anonymous referees for pointing out necessary clarifications.
RK would also like to thank the Max-Planck Institute for Mathematics in Bonn for its hospitality as well as the Humbolt foundation during the visit to Dirk Kreimer, where many of the ideas were discussed and developed further.

We would like  thank Martin Markl, Dirk Keimer, Francis Brown, Vladimir Turaev  and Ben Ward for discussions, as well as the referees for the careful reading of the manuscript.

RK  thankfully  acknowledges  support  from  the  Simons foundation under
collaboration grant \# 317149.

\section*{Convention} We will be dealing with symmetric monoidal categories. To make the notation simpler, we assume that these are concrete categories.
A concrete category can be thought of as one whose objects are sets with structure and whose morphisms are functions that preserve that structure. Technically, it is a category possessing a faithful monoidal functor to the category $\Set$.
This is not necessary. Instead of using elements directly, one can as usual take the underlying category of a monoidal category as in \cite{kellybook}.
We will denote by $\Set$ the monoidal category of sets with disjoint union as the monoidal structure. $\C$ will be a  monoidal category. If necessary it is assumed
to be cocomplete and have a  monoidal product that commutes with colimits as in \cite{feynman}. 
Graphs are taken to be given as in the definition of \cite{borisovmanin}, see also the Appendix of \cite{feynman}.

\section{Feynman categories}

\subsection{Feynman categories, the definition}
We recall the setup for Feynman categories from \cite{feynman}.

Fix a  symmetric monoidal category $\F$ and let $\asts$ be a category that is a groupoid, that  is $\asts=Iso(\asts)$. Denote the free symmetric monoidal category on $\V$ by $\V^{\otimes}$.
Furthermore let $\imath\colon\V\to \F$ be a functor and let $\imath^{\otimes}$ be the induced monoidal functor $\imath^{\otimes}\colon \V^{\otimes}\to \F$.

\begin{df}
\label{commadef}
\label{feynmandef}
A triple $\FF=(\V,\F,\imath)$ of objects as above is called a Feynman category
if

\begin{enumerate}
\renewcommand{\theenumi}{\roman{enumi}}

\item (Isomorphism condition)
\label{objectcond}
The monoidal functor $\imath^{\otimes}$ induces an equivalence of symmetric monoidal categories between $\V^{\otimes}$ and $Iso(\F)$.

\item (Hereditary condition) The monoidal functor $\imath^{\otimes}$ induces an equivalence of symmetric monoidal categories between $Iso(\F\downarrow \V)^{\otimes}$ and
$Iso(\F\downarrow\F)$.
\label{morcond}

\item (Size condition) For any $\ast\in \asts$, the comma category $(\clusters\downarrow\ast)$ is
essentially small,
viz.\ it is equivalent to a small category.
\end{enumerate}

\end{df}

For an explanation see the Remark below which is made precise in the next subsection.
\begin{rmk}
The basic way to understand these axioms, which also give the link to physics and their applicability is as follows. $\V$ stands for vertices. These are the  basic objects together with their symmetries. Condition (i) then says that all objects of $\F$ can be decomposed into tensor products of basic objects and moreover this is unique up to replacing the basic objects by isomorphic ones and permutations. Condition (ii) says that all the morphisms in $\F$ can be decomposed as tensor products of morphisms from tensor products of basic objects to just one basic object, i.e.\ a many to many morphism is a tensor product of many to one morphisms. A basic example is given by graphs. Here $\V$ are indeed the vertices and the morphisms are indexed by graphs, see the Appendix of \cite{feynman} for the graph formalism we use. The composition of morphisms is then the substitution of a collection of graphs with the correct number of external legs into the vertices of the other graph. Doing this vertex by  vertex is decomposition into the many to one morphisms, see \cite{feynman} for more details, especially the Section 2 and the Appendix.
Condition (iii) is technical and is there to ensure that certain needed constructions go through.
\end{rmk}

\begin{nota}
Given a Feynman category $\FF$ we will sometimes write $\V_{\FF}$ and $\F_{\FF}$ for the underlying groupoid and monoidal category and often take the liberty of dropping the subscripts if we have already fixed $\FF$.
\end{nota}

\begin{ex}
There are plenty of examples given in \cite{feynman} among them are the ones listed in Table \ref{table1}.
These are all built on the Feynman category $\GG=(\Crl,\Agg,\imath)$, see \cite{feynman}. This is a full subcategory of the subcategory of
graphs of Borisov--Manin \cite{borisovmanin}. The objects of $\Crl$ are graphs with one vertex and no edges whose flags are labeled by a set $S$. Such a corolla will be called $*_S$.
The important remark here is that it is not the graphs that appear as objects that play the familiar role of graphs in the usual theory. Rather 
the morphisms have underlying graphs and it is these that are the relevant ones.
The different flavors of operad--types are then given by decorating and restricting the morphisms, actually the graphs underling the morphisms.
This was noticed in \cite{feynman}, but  here we can indeed say that all the decorations  indeed correspond to decorated Feynman categories and restriction
means passing to subcategories. This is explained in detail in \S\ref{examplepar}.

\subsubsection{Mods and Ops}
Although Feynman categories are interesting objects of study in their own right, 
for many applications it is interesting to consider functors from them into another monoidal category. It is these functors that represent operads, PROPs etc.. 

\begin{df}\label{opdef1}
Let $\CalC$ be a symmetric monoidal category and $\FF=(\V,\F,\imath)$ be a Feynman category.
Consider the category of strong symmetric monoidal functors
$\fopsc:=Fun_{\otimes}(\clusters,\CalC)$
which we will call $\F$--$\opers$ in $\CalC$ and a particular element will be called an $\F$-$\oper$ in $\C$. The category
of functors
$\vmodsc:=Fun(\asts,\CalC)$ will be
called $\V$-modules in $\CalC$ with an element being called a $\V$--mod in $\C$.

If $\CalC$ and $\FF$ are fixed,
we will only write $\opcat$ and $\smodcat$.
\end{df}

Notice that since $\isoclusters$ is equivalent to the free symmetric
monoidal category on $\asts$ we have an equivalence of
categories between $Fun(\asts,\CalC)$ and $Fun_{\otimes}(\isoclusters,\CalC)$.

\begin{ex}
In the theory of (pseudo)-operads, $\opcat$ is the category
of (pseudo)-operads and $\smodcat$ is the category of $\SS$--modules.
A longer list of classical notions is given in \cite{feynman}.
\end{ex}

\subsubsection{Non--$\Sigma$ version}
There is a version of non--symmetric Feynman categories.
For this one lets $\F$ be only monoidal, and not symmetric monoidal and
 $\V^{\otimes}$ is taken to be simply the free monoidal category. The rest of the axioms are the same {\it mutatis mutandis}.

\subsubsection{Explanations of Condition (i) and (ii)}
Since the definition of a Feynman category is pretty dense, we unravel the conditions a bit. This also fixes some notation used later on.

{\sc Condition {\rm (i)} and Change of base.} Due to the condition (\ref{objectcond})
for each $X\in \clusters$ there exists an isomorphism
\begin{equation}
\label{objdecompeq}
\phi_X\colon X\stackrel{\sim}{\rightarrow} \otimes_{v\in I} \imath(\ast_v) \text{ with } \ast_v\in \asts
\end{equation}
 for a finite index set $I$. Moreover,  fixing a
functor $\jmath:Iso(\F)\to \V$ which yields the equivalence, we fix a decomposition for each $X$. We will call this a choice of basis.
The decomposition (\ref{objdecompeq}) has the following property: For any two such isomorphisms (choices of basis) there is a bijection  of the two index sets $\psi\colon I\to J$ and a diagram

\begin{equation}
\xymatrix
{&\bigotimes_{v\in I} \imath(\ast_v) \ar[dd]^{\simeq \bigotimes \imath(\phi_v)}\\
X\ar[ur]^{\phi_X}_{\simeq}\ar[dr]^{\simeq}_{\phi'_X}&\\
&\bigotimes_{w\in J} \imath(\ast'_w)
}
\end{equation}
where
$\phi_v\in Hom_{\asts}(\ast_v,\ast'_{\psi(v)})$ are isomorphisms.
We call the unambiguously defined value $|I|$ the {\em length of $X$} and denote it by $|X|$.

{\sc Hereditary condition {\rm (ii)} as decompositions}\label{condii}
The hereditary condition means that the comma category $(\clusters\downarrow\asts)$ generates the morphisms in the following way.
 Any morphism $X\to X'$ in $\F$ is
part of a commutative diagram
\begin{equation}
\label{morphdecompeq}
\xymatrix
{
X \ar[rr]^{\phi}\ar[d]_{\simeq}&& X'\ar[d]^{\simeq} \\
 \bigotimes_{v\in I} X_v\ar[rr]^{\bigotimes_{v\in I}\phi_{v}}&&\bigotimes_{v\in I} \imath(\ast_v)
}
\end{equation}
where $\ast_v\in \asts$, $X_v\in \clusters$ and $\phi_v\in Hom(X_v,\imath(\ast_v))$.

 Notice that if in  (\ref{morphdecompeq}) the vertical isomorphisms are fixed, then so is the lower morphism.
Hence, a choice of basis also fixes a  particular diagram of  type (\ref{morphdecompeq}) where the $X_v$ are now each a tensor product of elements of $\imath(\asts)$.

Furthermore, given {\em any} two decompositions of a morphism according to (\ref{morcond}), it follows from the previous remark  that there is a unique isomorphism in $(\imath^{\otimes}\downarrow \imath^{\otimes})$
giving an isomorphism between the two decompositions, that is between the lower rows.

The condition of equivalence of comma categories furthermore  implies that (1) for any two such decompositions $\bigotimes_{v\in I} \phi_v$ and $\bigotimes_{v'\in I'}\phi'_{v'}$
there is a bijection
$\psi:I\to I'$ and isomorphisms
$\sigma_v:X_v\to X'_{\psi(v)}$ s.t. $P^{-1}_{\psi}\circ \bigotimes_v \sigma_v\circ \phi_v =\bigotimes \phi'_{v'}$ where
$P_{\psi}$ is the permutation corresponding to $\psi$. And (2)  that these are the only isomorphisms between morphisms.

Notice that therefore  we can further decompose the $X_v$ as $X_v\simeq \bigotimes_{w\in I_v}\imath(\ast_w)$ in (\ref{morphdecompeq}), so that
for $J= \amalg_{v\in I}I_v$: $X\simeq \bigotimes_{w\in J}\ast_w=
\bigotimes_{v\in I} \left(\bigotimes_{w\in I_v}\imath(\ast_w)\right)$. And we can also assume that this is the decomposition of $X$ if we choose a base functor $\jmath$. This means there is a diagram

%It may happen that $|J|=|I|$, but this  is usually not the case.
\begin{equation}
\label{morphdecompeq2}
\xymatrix
{
X \ar[rr]^{\phi}\ar[d]_{\simeq}&& X'\ar[d]^{\simeq} \\
 \bigotimes_{v\in I} \left(\bigotimes_{w\in I_v}\imath(\ast_w)\right)
 \ar[rr]^{\bigotimes_{v\in I}\phi_{v}}&&\bigotimes_{v\in I} \imath(\ast_v)
}
\end{equation}
with $\phi_v:\bigotimes_{w\in I_v}\imath(\ast_w)\to \imath(\ast_v)$ and the vertical isomorphisms given by $\jmath$ up to a possible permutation.

\subsection{Simplification and Enrichments}
\label{enrichedpar}

There are basically three levels for the definition of a Feynman category. 
The shortest formulation is given in Definition \ref{commadef}. 
If the category $\F$ is set--like, i.e.\ has a faithful monoidal functor to the monoidal category of sets with disjoint union, then 
there is a slightly simpler definition, see Lemma \ref{oldcritlem}, below. This is the case for the category of sets itself, but not for linear categories, such as vector spaces. For the latter one needs Definition  \ref{feynmandef}.
Finally, to  pass on to the enriched case a rather high brow categorical formulation can be used. For the more details on the enriched version; see \S\ref{enrichedpar}.
We will present the arguments on the level of Definition  \ref{feynmandef}.
As a convenience for the reader, we include a brief discussion here.

All constructions presented here go through in the enriched case as well, but we will not burden the reader with the tedious details that are straightforward for the expert.

%
%Often, for instance in $(\Set, \amalg)$ the conditions (i) and (\ref{morphdecompeq}) imply (ii),
%that is (1) and (2) above follow (see below).

\begin{lem}\cite{feynman}
\label{oldcritlem}
If $(\F,\otimes)$ has a  faithful strong symmmetric monoidal functor to $(\Set,\amalg)$, then (i) and (\ref{morphdecompeq}) imply (ii).
\end{lem}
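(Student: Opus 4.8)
The plan is to show that under the hypothesis of a faithful strong symmetric monoidal functor $\Phi\colon(\F,\otimes)\to(\Set,\amalg)$, the single decomposition property (\ref{morphdecompeq}) for morphisms, together with condition (i), is enough to recover the full hereditary condition (ii), namely that $\imath^{\otimes}$ induces an equivalence between $Iso(\F\downarrow\V)^{\otimes}$ and $Iso(\F\downarrow\F)$. Recall that (ii) has two parts: essential surjectivity (every object of $Iso(\F\downarrow\F)$ is isomorphic to one in the image of $\imath^{\otimes}$, i.e.\ every morphism of $\F$ admits a decomposition as in (\ref{morphdecompeq})) and full faithfulness (the isomorphisms between such decompositions are exactly the expected ones coming from $(\imath^{\otimes}\downarrow\imath^{\otimes})$). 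Essential surjectivity is precisely the assumed (\ref{morphdecompeq}), so the work is entirely in full faithfulness, and this is where the faithful functor to $\Set$ does its job.

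First I would set up notation: given a morphism $\phi\colon X\to X'$ in $\F$, use condition (i) to write $X'\simeq\bigotimes_{v\in I}\imath(\ast_v)$, and then (\ref{morphdecompeq}) to obtain a decomposition $\bigotimes_{v\in I}\phi_v$ with $\phi_v\colon X_v\to\imath(\ast_v)$. Faithfulness of $\imath^{\otimes}$ onto $Iso(\F)$ from (i) already identifies $\V^{\otimes}$ with $Iso(\F)$, so the length $|X|$ and the indexing data on the target side are unambiguous. The heart of the matter is the claim that if $\bigotimes_{v\in I}\phi_v$ and $\bigotimes_{v'\in I'}\phi'_{v'}$ are two decompositions of the same morphism $\phi$ (after fixing the vertical isomorphisms), then there is a bijection $\psi\colon I\to I'$ and isomorphisms $\sigma_v\colon X_v\to X'_{\psi(v)}$ intertwining them via the permutation $P_\psi$, and that these are the only isomorphisms. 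To prove this I would apply $\Phi$: since $\Phi$ is strong monoidal, $\Phi(X)\cong\coprod_{v\in I}\Phi(X_v)$ and $\Phi(X')\cong\coprod_{v\in I}\Phi(\imath(\ast_v))$ as sets, and the morphism $\Phi(\phi)$ becomes a function between disjoint unions which respects (up to the identified isomorphisms) the two partitions coming from the two decompositions. On the target side, the partition of $\Phi(X')$ into the blocks $\Phi(\imath(\ast_v))$ is canonical because it is pinned down by condition (i). Then I would argue that the decomposition of the source must be compatible: each block $\Phi(X_v)$ maps into a single target block, which forces $\psi$ to be a well-defined bijection and forces $\phi_v$ to factor as $\imath(\ast_{\psi(v)})$-valued up to the isomorphism $\sigma_v$; faithfulness of $\Phi$ then lifts the set-level identifications back to genuine isomorphisms in $\F$, and uniqueness follows the same way since $\Phi$ is faithful. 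This reproduces conditions (1) and (2) of the "Hereditary condition (ii) as decompositions" discussion, which together say exactly that $\imath^{\otimes}$ is fully faithful on isomorphisms, giving the equivalence in (ii).

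The main obstacle I anticipate is the bookkeeping around the target-side partition: one needs that the decomposition of $X'$ into $\bigotimes_{v}\imath(\ast_v)$ is not just any decomposition but the canonical one from (i), so that the two decompositions of $\phi$ really do share the same target data and the comparison of source partitions is forced rather than merely allowed. Handling the permutations $P_\psi$ correctly — tracking that the reindexing on the source is exactly the one induced by the canonical reindexing on the target — is the delicate point, and it is exactly here that one uses both that $\Phi$ is \emph{symmetric} monoidal (so permutations on the $\F$ side go to the corresponding permutations of disjoint unions) and that it is \emph{faithful} (so no extra automorphisms of the decompositions can appear beyond those visible in $\Set$). Everything else — strong monoidality giving the coproduct decompositions, condition (i) giving the canonical target partition — is formal, so I would spend the bulk of the write-up making the partition-compatibility argument precise and then invoking faithfulness twice, once for existence of the lift and once for uniqueness.
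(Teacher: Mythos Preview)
The paper does not actually prove this lemma in situ; it is quoted from \cite{feynman}, and the only thing added here is the reformulation as the Hom-set formula displayed immediately after the statement, which is a restatement of (ii) rather than a derivation of it from (i), (\ref{morphdecompeq}), and the faithful functor to $\Set$. So there is no paper proof to compare against, only your proposal to assess.

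Your overall strategy is the correct one, and the key mechanism---that in $(\Set,\amalg)$ a morphism into a coproduct pins down the source partition as the preimage partition, so two decompositions sharing a target partition must share source blocks---is exactly why the $\Set$ hypothesis is useful. There is, however, one genuine imprecision that you should repair. You write that ``faithfulness of $\Phi$ then lifts the set-level identifications back to genuine isomorphisms in $\F$.'' Faithfulness is injectivity on Hom-sets; it cannot manufacture morphisms in $\F$ from bijections in $\Set$. The isomorphisms $\sigma_v\colon X_v\to X'_{\psi(v)}$ are not obtained by lifting. They come from condition (i): the two decompositions of $\phi$ carry source isomorphisms $X\simeq\bigotimes_v X_v$ and $X\simeq\bigotimes_{v'} X'_{v'}$, whose composite is an isomorphism in $Iso(\F)\simeq\V^{\otimes}$ and is therefore already a permutation $\chi$ of the underlying $\ast_w$'s together with componentwise isomorphisms in $\V$. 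The role of the faithful functor to $\Set$ is only to show that $\chi$ respects the coarser partition into the $I_v$'s (because in $\Set$ those blocks are forced to be the preimage blocks of $\Phi(\phi)$); once that is established, the $\sigma_v$ are obtained by regrouping the componentwise isomorphisms supplied by (i). Faithfulness is then used a second time, correctly, for uniqueness: two candidate data $(\psi,(\sigma_v))$ inducing the same $(\alpha,\beta)$ in $\F$ have equal images in $\Set$ and hence coincide. So your plan is sound, but in the write-up separate ``construct $\sigma_v$ via (i)'' from ``use $\Set$ to constrain the permutation'' rather than packaging both as a lift through faithfulness.
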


Another way to phrase this is that for any choice of $\jmath$ the following equation holds:
\begin{multline}
Hom(X,Y)=Hom(\jmath(X),\jmath(Y))=Hom(\bigotimes_{w\in W}\imath(\ast_w),\bigotimes_{v\in v}\imath(\ast_v))\\
=
\coprod_{\text{surjections }\psi:W\twoheadrightarrow V}\prod_{v\in V}Hom(\bigotimes_{\imath(w_v)\in \psi^{-1}(v)}\imath(\ast_{w_v}),\imath(\ast_v))
\end{multline}

\subsubsection{Enrichments}
\label{enrichments}
The theory of Feynman categories also exists for enrichments. There are two cases.
The first is the Cartesian case, that is the case where the enrichment category $\CalE$ has
a monoidal product which is also Cartesian, like in $\Set$ or in $\Top$.
In this case, everything carries over verbatim if one used indexed colimits for the Kan extensions, see \cite{feynman,kellybook}. 
As explained in \cite{feynman} 
to generalize to the enriched setting the hereditary condition (\ref{morcond}) has to be  reformulated as:
\begin{itemize}

\item[(ii')] The pullback of presheaves $\imath^{\otimes \wedge}\colon [\F^{op},Set]\to [\V^{\otimes op},Set]$

{\em restricted to representable presheaves} is monoidal.
\end{itemize}

\begin{lem}
\label{altlem} 
$\FF$ is a Feynman category if and only (i), (ii') and (iii) hold.
\end{lem}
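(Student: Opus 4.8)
The plan is to show that condition (ii') is equivalent to condition (ii) of Definition \ref{feynmandef}, so that the lemma follows immediately from the original definition. First I would recall the Yoneda-type translation: by the Yoneda lemma, the functor $\imath^{\otimes}\colon \V^{\otimes}\to \F$ induces on presheaf categories the restriction $\imath^{\otimes\wedge}\colon [\F^{op},\Set]\to [\V^{\otimes op},\Set]$, and its value on a representable presheaf $\F(-,X)$ is the presheaf $U\mapsto \F(\imath^{\otimes}(U),X)$ on $\V^{\otimes}$. The key observation is that this presheaf, viewed via the Grothendieck construction, is exactly (the nerve of) the comma category $(\imath^{\otimes}\downarrow X)$, and that $\imath^{\otimes}$ being an equivalence onto $\Iso(\F)$ (condition (i)) lets us identify $\Iso(\imath^{\otimes}\downarrow X)$ with $\Iso(\F\downarrow X)$ up to the bookkeeping already spelled out in the paragraph ``Hereditary condition (ii) as decompositions''. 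So I would first set up this dictionary carefully.

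Next, I would unpack what ``monoidal on representables'' means for $\imath^{\otimes\wedge}$. Since $\V^{\otimes}$ is free symmetric monoidal on $\V$, a presheaf on $\V^{\otimes}$ is the same data as a symmetric sequence, and the monoidal (Day convolution) structure on $[\V^{\otimes op},\Set]$ is concatenation with the appropriate induction/symmetrization. The statement that $\imath^{\otimes\wedge}$ restricted to representables is monoidal then says precisely that for $X,X'\in\F$ the canonical comparison map
\begin{equation}
\imath^{\otimes\wedge}(\F(-,X))\otimes_{\mathrm{Day}}\imath^{\otimes\wedge}(\F(-,X'))\longrightarrow \imath^{\otimes\wedge}(\F(-,X\otimes X'))
\end{equation}
is an isomorphism of presheaves on $\V^{\otimes}$. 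Evaluating this at an object $\otimes_{v}\imath(\ast_v)$ and passing to isomorphism classes recovers exactly the decomposition of $\Hom(X\otimes X',-)$ as a coproduct over bijections/surjections of products of Hom-sets of the individual factors, i.e.\ the statement that $\Iso(\F\downarrow\V)^{\otimes}\simeq\Iso(\F\downarrow\F)$ as symmetric monoidal categories. Conversely, condition (ii) of Definition \ref{feynmandef} gives this isomorphism on objects and morphisms, and one checks it is natural, hence yields the presheaf isomorphism above. This two-way translation is the heart of the argument.

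Finally I would assemble: conditions (i) and (iii) are literally unchanged between the two formulations, and by the previous step (ii) $\Leftrightarrow$ (ii'), so $\FF$ satisfies (i), (ii), (iii) if and only if it satisfies (i), (ii'), (iii), which is the claim. The main obstacle I anticipate is the bookkeeping in the middle step: one must be careful that ``restricted to representable presheaves is monoidal'' is interpreted as the lax structure map being an isomorphism \emph{on the image of the Yoneda embedding} (not that the pullback is monoidal on all presheaves, which would be false in general), and one must match the Day convolution monoidal structure on symmetric sequences with the ``tensor product of many-to-one morphisms'' decomposition of (\ref{morphdecompeq}) and (\ref{morphdecompeq2}), keeping track of the permutations $P_{\psi}$ that already appear in the discussion of condition (ii). Once that identification is made precise, everything else is formal. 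I would also remark that this is essentially the reformulation already used in \cite{feynman}, so the proof can legitimately be presented as ``unwinding definitions,'' citing \cite{feynman,kellybook} for the Day convolution and indexed-colimit facts in the Cartesian enriched setting.
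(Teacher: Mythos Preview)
The paper does not actually include a proof of Lemma~\ref{altlem}; it is stated without proof, with the surrounding text indicating that the reformulation via (ii') is taken from \cite{feynman}, where the enriched theory is developed. So there is nothing to compare your argument against in this paper.

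That said, your strategy is the right one and is essentially what underlies the claim: showing that, in the presence of (i), condition (ii') is equivalent to (ii). A few small points to tighten. First, when you ``evaluate at $\otimes_v \imath(\ast_v)$'' you mean an object of $\V^{\otimes}$, so write $\otimes_v \ast_v$; the $\imath$ only appears once you pass to $\F$. Second, be explicit that Day convolution of representables on $[\F^{op},\Set]$ is again representable, $\F(-,X)\day\F(-,X')\cong\F(-,X\otimes X')$, so the monoidality of $\imath^{\otimes\wedge}$ on representables is exactly the assertion that the comparison
\[
\F(\imath^{\otimes}(-),X)\day\F(\imath^{\otimes}(-),X')\longrightarrow \F(\imath^{\otimes}(-),X\otimes X')
\]
is an isomorphism of presheaves on $\V^{\otimes}$. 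Third, to pass from this binary statement to the many-to-one decomposition of (ii), you need an induction: use (i) to write $X'\cong\bigotimes_v\imath(\ast_v)$ and peel off one factor at a time. Conversely, (ii) gives the full decomposition into one-vertex targets, and regrouping according to the two factors $X,X'$ yields the binary isomorphism. Finally, ``passing to isomorphism classes'' is not enough: condition (ii) is an equivalence of \emph{categories}, so you must also match morphisms, but since both sides are groupoids and the Day convolution coend already encodes the isomorphisms in $\V^{\otimes}$, this comes along for free once you write the coend out.
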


In the Cartesian case, one can use the definition that $\FF$ is a Feynman category if (i),(ii') and (iii) hold.
On the other hand, in the non--Cartesian case, that is e.g.\ for $k$-Vect some things are harder to define.  Again one uses (ii'), but
the definition of groupoid is also replaced by the condition to be a free groupoid defined by an adjunction, see \cite{feynman}.

\section{Decorated Feynman categories}
We will now give the main construction of this paper.
It takes a given Feynman category $\FF$ and an element $\O\in \F$-$\opcat_{\C}$
and renders a new Feynman category $\FFdeco$. The new Feynman category  
 has its vertices $\V$ decorated by $\O$. More precisely, 
we will define the triple  $\FFdeco=(\Vdeco,\Fdeco,\ideco)$ and then prove
that it is again a Feynman category.
We will consider two cases separately, when $\C$ is Cartesian monoidal and if it is not.

\subsection{The triple $\Fepair$}
Fix a Feynman category $\Fe = (\V,\F,\io)$, and let $\C$ be a fixed  symmetric Cartesian monoidal  category. 
\subsubsection{$\Fdeco$: Decorating $\F$ by $\O$}
For any $\Op \in \F$-$\Op ps_{\C}$,  we will define a symmetric monoidal category $\Fpair$.

The {\sc objects} in $\Fdeco$ are pairs $(X,a_X)$,
where $X \in \Ob(\F)$ and $a_X \in \Op(X)$. 

The {\sc  morphisms} in $\Fpair$ are given as follows:  $Hom_{\Fdeco}((X,a_X),(Y,a_Y))$ 
consists of those morphisms $\phi\in Hom_{\F}(X , Y)$ which satisfy $\Op(\phi)(a_X) = a_Y$. By abuse of notation, we will use the name $\phi$ also for the morphism it defines in  $\Fdeco$.
The identity morphisms are simply $id_X:(X,a_X) \to (X,a_X)$. 
Composition is well defined, since given $\phi:(X,a_X) \to (Y,a_Y)$ and $\psi:(Y,a_Y) \to (Z,a_Z)$, we have that $\Op(\psi \circ \phi)(a_X) = \Op(\psi)\circ\Op(\phi)(a_X) = \Op(\psi)(a_Y) = a_Z$. Therefore $\psi\circ\phi$ is a well-defined morphism from $(X,a_X)$ to $(Z,a_Z)$. Associativity of this composition follows from that of composition in $\F$.

We define {\sc  the monoidal product} $\otimes_{\Fpair}$ by:
$$(X,a_X) \otimes_{\Fpair} (Y,a_Y) = (X \otimes_{\F} Y, a_{X \otimes Y})$$
where $a_{X \otimes Y} = \tau(a_X \otimes_{\C} a_Y)$ and
 $\tau_{X,Y}:\Op(X)\otimes_{\C}\Op(Y) \to \Op(X \otimes_{\F} Y)$ are the natural isomorphisms provided by the strong monoidal functor $\Op$.
 On morphisms the usual underlying monoidal structure in $\F$ restricts, again by virtue of $\O$ being strong monoidal.
The identity object is $(I_{\F},e)$, where $I_{\F}$ is the identity object of $\F$ and $e$ is the distinguished element of $\Op(I_{\F})=I_{\C}$ (the single element if $I_{\C}$ is a one-point set, the identity element if $I_{\C}$ is a ground field $k$, etc.). Technically this is the element $id_{I_{\C}}\in Hom_{\C}(I_{\C},I_{\C})$.
\begin{lem} If $\lambda_X^{\F}:X \otimes I \to X$  is the left unit constraint in $\F$, then
$\Op(\lambda_X^{\F})(a_{X\otimes I}) = a_X$,
and similarly for the right unit constraint. Hence both unit constraints induce morphisms in $\Fdeco$ which we will call  $\lambda_X^{\Fdeco}$ and $\rho_X^{\Fdeco}$.
\end{lem}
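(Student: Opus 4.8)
The plan is to recognize the asserted identity as an immediate consequence of the unit coherence axioms satisfied by the strong monoidal functor $\Op$, read off on elements. Recall that, under the concreteness convention, an element $a\in\Op(X)$ is the same datum as a morphism $a\colon I_{\C}\to\Op(X)$ in $\C$, and that the distinguished element $e\in\Op(I_{\F})=I_{\C}$ is exactly $id_{I_{\C}}$, which is also the unit structure isomorphism $\epsilon\colon I_{\C}\to\Op(I_{\F})$ carried by the strong monoidal functor $\Op$. By construction $a_{X\otimes I}=\tau_{X,I_{\F}}(a_X\otimes_{\C}e)$ and, symmetrically, $a_{I\otimes X}=\tau_{I_{\F},X}(e\otimes_{\C}a_X)$.

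First I would rewrite $\Op(\lambda_X^{\F})(a_{X\otimes I})$ using the definition of $a_{X\otimes I}$ and functoriality of $\Op$, so that it becomes $\bigl(\Op(\lambda_X^{\F})\circ\tau_{X,I_{\F}}\bigr)(a_X\otimes_{\C}e)$. Now the relevant unit coherence axiom for the strong monoidal functor $\Op$ reads $\Op(\lambda_X^{\F})\circ\tau_{X,I_{\F}}\circ(id_{\Op(X)}\otimes_{\C}\epsilon)=\lambda^{\C}_{\Op(X)}$, the corresponding unit constraint of $\C$; since $e=\epsilon$ this yields $\Op(\lambda_X^{\F})(a_{X\otimes I})=\lambda^{\C}_{\Op(X)}(a_X\otimes_{\C}e)$. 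Finally, on elements the unit constraint $\lambda^{\C}_{\Op(X)}\colon\Op(X)\otimes_{\C}I_{\C}\to\Op(X)$ simply deletes the unit tensor factor, i.e.\ $\lambda^{\C}_{\Op(X)}(a_X\otimes_{\C}e)=a_X$, which is the claim. The argument for the right unit constraint $\rho_X^{\F}\colon I\otimes X\to X$ is identical, with the coherence axiom $\Op(\rho_X^{\F})\circ\tau_{I_{\F},X}\circ(\epsilon\otimes_{\C}id_{\Op(X)})=\rho^{\C}_{\Op(X)}$ used in place of the previous one.

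Granting these two identities, the last assertion of the lemma is immediate from the definition of $\Fdeco$: a morphism $\phi\in Hom_{\F}(X',Y')$ descends to a morphism $(X',a_{X'})\to(Y',a_{Y'})$ of $\Fdeco$ exactly when $\Op(\phi)(a_{X'})=a_{Y'}$, so $\lambda_X^{\F}$ yields a morphism $(X\otimes I,a_{X\otimes I})\to(X,a_X)$ and $\rho_X^{\F}$ a morphism $(I\otimes X,a_{I\otimes X})\to(X,a_X)$ in $\Fdeco$; we name these $\lambda_X^{\Fdeco}$ and $\rho_X^{\Fdeco}$. I do not anticipate a genuine obstacle here: the only point requiring care is the bookkeeping that identifies the distinguished element $e$ of $\Op(I_{\F})=I_{\C}$ with the unit structure isomorphism of $\Op$, after which the statement is a verbatim translation of a standard monoidal-functor coherence square. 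In the non-concrete setting (in particular the non-Cartesian case such as $k$-$\Vect$, treated later) one simply replaces the element-wise computation by the same square read as a commutative diagram, with no change in content.
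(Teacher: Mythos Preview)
Your proof is correct and follows essentially the same approach as the paper: both invoke the unit coherence axiom for the strong monoidal functor $\Op$, rewriting $\Op(\lambda_X^{\F})\circ\tau_{X,I}$ as $\lambda_{\Op(X)}^{\C}$ (the paper tacitly absorbs the unit isomorphism $\epsilon$ by identifying $\Op(I_{\F})=I_{\C}$, whereas you make it explicit), and then evaluate on $a_X\otimes e$. Your version is slightly more careful about the bookkeeping with $\epsilon$, but the argument is the same.
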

\begin{proof}
 Applying $\Op$ gives the natural isomorphism $\Op(\lambda_X^{\F}):\Op(X \otimes I) \to \Op(X)$. Since $\Op$ is strong symmetric monoidal, it must preserve the left identity isomorphism, that is $\Op(\lambda_X^{\F}) = \lambda_{\Op(X)}^{\C} \circ \tau_{X,I}^{-1}:\Op(X) \otimes \Op(I) \to \Op(X)$, where $\lambda^{\C}$ is the left unit constraint in $\C$. In sum, we have
$$\Op(\lambda_X^{\F})(a_{X\otimes I})=\lambda_{\Op(X)}^{\C}\circ \tau_{X,I}^{-1}(a_{X\otimes I}) = \lambda_{\Op(X)}^{\C}(a_X \otimes e) = a_X$$
\end{proof}
Since $\O$ is a strong symmetric monoidal functor, the commutativity constraints  
$c^{\C}_{\O(X),\O(Y)}: \O(X)\otimes \O(Y)\to \O(Y)\otimes \O(X)$  are compatible
with those in $\F$ via
$c^{\C}_{\O(X),\O(Y)}=\tau_{Y,X}^{-1}\circ \O(c^{\F}_{X,Y})\circ \tau_{X,Y}$.

\begin{lem} The $\O(c^{\F})$ induce a symmetric monoidal structure on $\F_{dec\O}$.
\end{lem}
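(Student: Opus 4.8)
The plan is to verify that the commutativity constraints $c^{\Fdeco}_{(X,a_X),(Y,a_Y)} := c^{\F}_{X,Y}$ are well-defined morphisms in $\Fdeco$ and then invoke the fact that $\F$ is already symmetric monoidal, so that all the coherence axioms (the hexagon identities, naturality, and $c \circ c = \id$) hold automatically once we know the underlying morphisms are the same as in $\F$. First I would check well-definedness: we must show that $\O(c^{\F}_{X,Y})(a_{X\otimes Y}) = a_{Y\otimes X}$, where by definition $a_{X\otimes Y} = \tau_{X,Y}(a_X \otimes_{\C} a_Y)$ and $a_{Y\otimes X} = \tau_{Y,X}(a_Y \otimes_{\C} a_X)$. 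Using the displayed compatibility $c^{\C}_{\O(X),\O(Y)} = \tau_{Y,X}^{-1}\circ \O(c^{\F}_{X,Y})\circ \tau_{X,Y}$, we rearrange to $\O(c^{\F}_{X,Y}) = \tau_{Y,X}\circ c^{\C}_{\O(X),\O(Y)}\circ \tau_{X,Y}^{-1}$, and then compute
\[
\O(c^{\F}_{X,Y})(a_{X\otimes Y}) = \tau_{Y,X}\bigl(c^{\C}_{\O(X),\O(Y)}(a_X \otimes_{\C} a_Y)\bigr) = \tau_{Y,X}(a_Y \otimes_{\C} a_X) = a_{Y\otimes X},
\]
using the definition of the commutativity constraint in the concrete category $\C$ on elements. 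Hence each $c^{\F}_{X,Y}$ lifts to a morphism $c^{\Fdeco}_{(X,a_X),(Y,a_Y)}$ in $\Fdeco$.

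Next I would observe that the forgetful functor $\Fdeco \to \F$ sending $(X,a_X)\mapsto X$ and $\phi\mapsto\phi$ is a faithful monoidal functor (it is monoidal by the definition of $\otimes_{\Fpair}$ and of the unit object $(I_\F,e)$, and faithful because morphisms in $\Fdeco$ are by definition morphisms in $\F$ satisfying an extra condition). The associativity constraints, the unit constraints (by the preceding Lemma), and now the commutativity constraints in $\Fdeco$ are all defined as the lifts of the corresponding constraints in $\F$. Since these structure isomorphisms in $\Fdeco$ are carried by the faithful monoidal functor to the structure isomorphisms of the symmetric monoidal category $\F$, every coherence diagram that must commute in $\Fdeco$ (naturality squares for $c$, the two hexagon axioms, and the involutivity $c_{Y,X}\circ c_{X,Y} = \id$) is sent by a faithful functor to a diagram that commutes in $\F$; faithfulness then forces it to commute in $\Fdeco$. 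The fact that the relevant morphisms live in $\Fdeco$ at all — i.e.\ that both composites in each such diagram are morphisms in $\Fdeco$ — follows because they are built out of the $c^{\Fdeco}$, the associativity and unit constraints, and tensor products of these, all of which we have already shown (or shown in the previous two Lemmas) to be morphisms in $\Fdeco$.

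The only genuine point requiring care — the "main obstacle," though it is mild — is the well-definedness computation above: one must be sure that $\O$'s strong symmetric monoidality really does yield the stated relation $c^{\C}_{\O(X),\O(Y)}=\tau_{Y,X}^{-1}\circ \O(c^{\F}_{X,Y})\circ \tau_{X,Y}$ (this is exactly the compatibility of a strong symmetric monoidal functor with the symmetry, already recorded in the excerpt), and that evaluating on the element $a_X\otimes_\C a_Y$ behaves as expected in the concrete category $\C$, where $c^\C$ acts by $x\otimes y\mapsto y\otimes x$. Once this single identity is in hand, everything else is a formal consequence of transporting structure along the faithful monoidal forgetful functor, and no further coherence checking is needed.
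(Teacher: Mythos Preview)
Your proof is correct and follows essentially the same approach as the paper: both verify well-definedness of $c^{\F}_{X,Y}$ as a morphism in $\Fdeco$ via the identity $\O(c^{\F}_{X,Y})\circ\tau_{X,Y} = \tau_{Y,X}\circ c^{\C}_{\O(X),\O(Y)}$, and then observe that the coherence axioms are inherited from $\F$. The paper simply declares ``the braid relations and all compatibilities are clear,'' whereas you spell out the mechanism (faithfulness of the forgetful functor to $\F$) that makes this clear; your version is more explicit but not substantively different.
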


\begin{proof} Explicitly the isomorphisms are given by 
\begin{multline*}
(X,a_X)\otimes_{\Fdeco} (Y,a_Y)
%=(X\otimes Y, a_{X\otimes Y})
=(X\otimes Y, \tau(a_X\otimes a_Y))\stackrel{(c^{\F}, \O(c^{\F}))}{\longrightarrow} 
(Y\otimes X,\O(c^{\F})\circ\tau(a_X\otimes a_Y))\\
= 
(Y\otimes X,\tau\circ c^{\C}(a_X\otimes a_Y))
= (Y\otimes X,\tau (a_Y\otimes a_X))
%=(Y\otimes X,a_{Y\otimes X})
=(Y,a_Y)\otimes (X,a_X)
\end{multline*}
The braid relations and all compatibilities are clear.
\end{proof}

Summing up, we have the following proposition, whose proof is straightforward.
\begin{prop}
The category $\Fdeco$ is symmetric monoidal for $\otimes_{\Fdeco}$, $ I_{\Fdeco}$, $\lambda_X^{\Fdeco}$, $\rho_X^{\Fdeco}$ and the induced associativitiy and commutativity constraints. \qed
\end{prop}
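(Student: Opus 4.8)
The plan is to verify that the data assembled in the three preceding lemmas—the monoidal product $\otimes_{\Fdeco}$, the unit object $(I_\F,e)$ with the induced unit constraints $\lambda_X^{\Fdeco}$, $\rho_X^{\Fdeco}$, and the induced commutativity constraints $c^{\Fdeco}_{(X,a_X),(Y,a_Y)}$ together with an induced associativity constraint—satisfy the coherence axioms of a symmetric monoidal category. The key observation, which makes the proof ``straightforward'', is that the forgetful functor $U:\Fdeco\to\F$ sending $(X,a_X)\mapsto X$ and $\phi\mapsto\phi$ is faithful, is strict monoidal by construction of $\otimes_{\Fdeco}$ and $(I_\F,e)$, and sends each candidate structure isomorphism of $\Fdeco$ to the corresponding structure isomorphism of $\F$. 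Since $\F$ is already a symmetric monoidal category, every coherence diagram (pentagon, triangle, hexagons, naturality squares, $c^2=\mathrm{id}$) commutes in $\F$; faithfulness of $U$ then transports the identity of composites back up to $\Fdeco$, provided one first checks that all the morphisms appearing in these diagrams actually exist in $\Fdeco$.

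Concretely I would proceed as follows. First, define the associativity constraint: for objects $(X,a_X),(Y,a_Y),(Z,a_Z)$ take $\alpha^{\F}_{X,Y,Z}:(X\otimes Y)\otimes Z\to X\otimes(Y\otimes Z)$ and check, exactly as in the unit-constraint lemma, that $\O(\alpha^{\F}_{X,Y,Z})$ carries $a_{(X\otimes Y)\otimes Z}$ to $a_{X\otimes(Y\otimes Z)}$; this uses that $\O$ is strong monoidal, so $\O(\alpha^\F)$ equals $\tau^{-1}\circ(\mathrm{id}\otimes\tau)^{-1}\circ\alpha^{\C}\circ(\tau\otimes\mathrm{id})\circ\tau$, and then one just tracks $a_X\otimes a_Y\otimes a_Z$ through. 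Hence $\alpha^\F_{X,Y,Z}$ defines a morphism $\alpha^{\Fdeco}_{(X,a_X),(Y,a_Y),(Z,a_Z)}$ in $\Fdeco$, and it is an isomorphism because its inverse is likewise defined by $(\alpha^\F)^{-1}$. Second, naturality of $\alpha^{\Fdeco}$, $\lambda^{\Fdeco}$, $\rho^{\Fdeco}$, $c^{\Fdeco}$ in all variables: each naturality square, after applying $U$, becomes the corresponding naturality square in $\F$, which commutes; since both composites are morphisms in $\Fdeco$ (their source and target objects carry the right decorations, and $U$ of them agree) and $U$ is faithful, they are equal in $\Fdeco$. Third, the pentagon and triangle identities and the two hexagon identities and $c\circ c=\mathrm{id}$: again apply $U$, note that all vertices of these diagrams are objects of $\Fdeco$ with compatible decorations (this follows from the first step applied repeatedly) and all edges are morphisms of $\Fdeco$, observe the diagrams commute in $\F$, and conclude by faithfulness of $U$.

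The one point requiring a small argument—and the only place the decoration $\O$ is genuinely used—is the verification in each step that the relevant structure isomorphism of $\F$ really does intertwine the decorations, i.e.\ that $\O(\text{constraint})$ maps the $\tau$-iterated tensor of the $a$'s on the source to that on the target. This is where ``$\O$ strong symmetric monoidal'' enters: the coherence isomorphisms $\tau_{X,Y}:\O(X)\otimes_\C\O(Y)\to\O(X\otimes_\F Y)$ satisfy their own associativity, unit, and symmetry compatibilities, and these are precisely what is needed to push $a_X\otimes a_Y\otimes\cdots$ through. I would record this as a single remark: ``for every structure isomorphism $f$ of $\F$ one has $\O(f)(a_{\mathrm{source}})=a_{\mathrm{target}}$, by the coherence axioms for the monoidal functor $\O$,'' proved once for the associator by an explicit diagram chase and noted to be identical for the unitors and braidings. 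The main (mild) obstacle is therefore purely bookkeeping: carrying the nested $\tau$'s through an iterated tensor product consistently; there is no conceptual difficulty, which is why the proposition is asserted with ``whose proof is straightforward,'' and I would likely leave the details to the reader after indicating the faithful-functor reduction and the single coherence remark.
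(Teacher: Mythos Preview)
Your proposal is correct and matches the paper's intent: the paper gives no proof beyond ``whose proof is straightforward'' and a \qed, having already established in the preceding lemmas that the unit and commutativity constraints of $\F$ lift to $\Fdeco$. Your organizing device---the faithful strict monoidal forgetful functor $U:\Fdeco\to\F$, together with the check that each structure isomorphism of $\F$ intertwines decorations via the coherence of the strong monoidal functor $\O$---is precisely the routine verification the paper leaves implicit.
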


\subsubsection{Decorating $\V$ by $\O$, $\Vdeco$ and the functor $\ideco$}
We take as the {\sc objects} of $\Vpair$ pairs $(\ast,a_{\ast})$ with $\ast \in \Ob(\V)$ and $a_{\ast} \in \Op(\io(\ast))$. Similar to the above, we define a morphism $\phi:(\ast_v,a_{\ast_v}) \to (\ast_w,a_{\ast_w})$ to be a morphism $\phi:\ast_v \to \ast_w$ in $\V$ such that $\Op(\io(\phi))(a_{\ast_v}) = a_{\ast_w}$. Identity morphisms and compositions for this category are defined as in $\Fpair$. Since $\phi:\ast_v \to \ast_w$ in $\V$ is an isomorphism and $\Op(\io)$ is a functor, $\Op(\io(\phi))$ is an isomorphism. Thus $\phi:(\ast_v,a_{\ast_v}) \to (\ast_w,a_{\ast_w})$ is an isomorphism, and so $\Vpair$ is a groupoid.

Finally we define a functor $\iopair:\Vpair \to \Fpair$ by $\iopair(\ast,a_{\ast}) = (\io(\ast),a_{\ast})$. For a morphism $\phi:(\ast_v,a_{\ast_v}) \to (\ast_w,a_{\ast_w})$, we take $\iopair(\phi) = \io(\phi):(\io(\ast_v),a_{\ast_v}) \to (\io(\ast_w),a_{\ast_w})$. It is clear that $\iopair$ is a functor, since $\io$ is.

\subsubsection{The Feynman category $\FFdeco$}
\begin{thm}
\label{theorem1}
$\Fepair = (\Vpair,\Fpair,\iopair)$ is a Feynman category.
\end{thm}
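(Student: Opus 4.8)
The plan is to verify the three axioms of Definition~\ref{feynmandef} for the triple $\Fepair=(\Vpair,\Fpair,\iopair)$, leveraging the fact that $\Fe=(\V,\F,\io)$ is already known to be a Feynman category and that $\O$ is strong symmetric monoidal. The guiding principle is that $\Fpair$ and $\Vpair$ are "fibered" over $\F$ and $\V$ via the forgetful functors $(X,a_X)\mapsto X$, and that both the objects and the morphisms of $\Fpair$ are the objects and morphisms of $\F$ \emph{together with compatible decorations}; since $\O$ is monoidal, the decorations distribute over tensor products exactly as the underlying objects do. Thus each structural statement for $\Fe$ should lift verbatim once one checks that the decoration data is uniquely and compatibly determined at each stage.

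First I would establish the isomorphism condition~(i). Given an object $(X,a_X)\in\Fpair$, pick the chosen decomposition $\phi_X\colon X\xrightarrow{\sim}\bigotimes_{v\in I}\io(\ast_v)$ from~(\ref{objdecompeq}) in $\Fe$. Transport $a_X$ along $\O(\phi_X)$ to get $b:=\O(\phi_X)(a_X)\in\O(\bigotimes_v\io(\ast_v))$; using the monoidal structure maps $\tau$ of $\O$ (iterated), $b$ corresponds to an element of $\bigotimes_v\O(\io(\ast_v))$, hence to a family $(a_{\ast_v})_{v\in I}$ with $a_{\ast_v}\in\O(\io(\ast_v))$, i.e.\ to objects $(\ast_v,a_{\ast_v})\in\Vpair$. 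Then $\phi_X$ promotes to an isomorphism $(X,a_X)\xrightarrow{\sim}\bigotimes_v\iopair(\ast_v,a_{\ast_v})$ in $\Fpair$ by construction of $\otimes_{\Fpair}$. Conversely an isomorphism $(X,a_X)\to(Y,a_Y)$ is an isomorphism $X\to Y$ in $\F$ taking $a_X$ to $a_Y$, so $\Iso(\Fpair)$ is the category of $\F$-isomorphisms equipped with decorations; combined with the corresponding statement for $\Vpair^\otimes$ this gives the equivalence $\Vpair^{\otimes}\simeq\Iso(\Fpair)$. I would spell out that essential surjectivity, fullness and faithfulness all reduce to the already-known equivalence for $\Fe$ plus the bijective correspondence between $\O(\bigotimes\io(\ast_v))$ and $\bigotimes\O(\io(\ast_v))$ supplied by $\tau$.

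Next, for the hereditary condition~(ii), I would use the simplified criterion of Lemma~\ref{oldcritlem}/Lemma~\ref{altlem}: since we are in the Cartesian case, it suffices to check (i) — done above — together with the decomposition property~(\ref{morphdecompeq}) for morphisms of $\Fpair$. So take $\phi\colon(X,a_X)\to(Y,a_Y)$; underlying it is a morphism $\phi\colon X\to Y$ in $\F$ with $\O(\phi)(a_X)=a_Y$, which by~(\ref{morphdecompeq2}) fits in a commuting square with vertical isomorphisms $X\simeq\bigotimes_{v\in I}(\bigotimes_{w\in I_v}\io(\ast_w))$ and $Y\simeq\bigotimes_{v\in I}\io(\ast_v)$ and lower row $\bigotimes_v\phi_v$ with $\phi_v\colon\bigotimes_{w\in I_v}\io(\ast_w)\to\io(\ast_v)$. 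Decorate as in the previous paragraph: the vertical isos transport $a_X,a_Y$ to tensor-decomposed decorations $(a_{\ast_w})_{w}$ and $(a_{\ast_v})_v$, and the condition $\O(\phi)(a_X)=a_Y$ together with monoidality of $\O$ forces $\O(\phi_v)\big(\bigotimes_{w\in I_v}a_{\ast_w}\big)=a_{\ast_v}$ for each $v$ (this uses that $\tau$ is a \emph{natural} isomorphism, so $\O(\bigotimes\phi_v)$ intertwines the $\tau$'s). Hence each $\phi_v$ is a morphism $(\bigotimes_{w\in I_v}\iopair(\ast_w,a_{\ast_w}))\to\iopair(\ast_v,a_{\ast_v})$ in $\Fpair$ — and $\bigotimes_{w\in I_v}\iopair(\ast_w,a_{\ast_w})$ is an object coming from $\Vpair^{\otimes}$ — which is exactly the decomposition~(\ref{morphdecompeq}) in $\Fpair$.

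Finally, the size condition~(iii): for $(\ast,a_\ast)\in\Vpair$, the comma category $(\Fpair\downarrow(\ast,a_\ast))$ maps faithfully to $(\F\downarrow\io(\ast))$ by forgetting decorations — indeed, a morphism $(X,a_X)\to(\ast,a_\ast)$ is determined by its underlying $\F$-morphism, and objects of the source lie over objects of the target of the forgetful functor — and the latter comma category is essentially small because $\Fe$ is a Feynman category; an essentially small category has an essentially small subcategory-preimage, so~(iii) follows. Wrapping up, all three axioms hold and $\Fepair$ is a Feynman category. I expect the main obstacle to be the bookkeeping in step~(ii): correctly tracking how the decoration $a_X$ decomposes through the iterated structure isomorphisms $\tau$ of $\O$ and verifying that the naturality of $\tau$ really does yield the per-vertex equations $\O(\phi_v)(\bigotimes_w a_{\ast_w})=a_{\ast_v}$, rather than merely a single global identity. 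Everything else is a routine transfer of the corresponding facts about $\Fe$.
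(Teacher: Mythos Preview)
Your approach is correct and matches the paper's proof for condition~(i), where both you and the paper transport the decoration along $\O(\phi_X)$ and use $\tau$ to split it into per-vertex pieces. The main divergence is in condition~(ii): the paper does \emph{not} invoke Lemma~\ref{oldcritlem}, but instead constructs explicit quasi-inverse functors $K\colon\Iso(\Fpair\downarrow\Vpair)^{\otimes}\to\Iso(\Fpair\downarrow\Fpair)$ and $L$ going the other way, checking directly that they give an equivalence. Your route via the simplified criterion is shorter and perfectly legitimate in the Cartesian setting---indeed $\Fpair$ inherits a faithful strong monoidal functor to $(\Set,\amalg)$ because the forgetful functor $\Fpair\to\F$ is faithful and strict monoidal---so Lemma~\ref{oldcritlem} applies. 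The paper's more explicit approach has the advantage of making the equivalence concrete and of transferring more directly to the non-Cartesian and enriched variants discussed later, where the set-like hypothesis of Lemma~\ref{oldcritlem} is not available.

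For condition~(iii) your argument is slightly imprecise: the forgetful functor $(\Fpair\downarrow(\ast,a_\ast))\to(\F\downarrow\io(\ast))$ is faithful but not essentially injective on objects (many decorations $a_X$ sit over the same $X$), so ``essentially small subcategory-preimage'' is not the right phrase. The correct statement is that each fiber over an object of the essentially small target is a \emph{set} (a subset of $\O(X)$), hence the total category is still essentially small. The paper makes this precise by building a small category $\tilde{\mathcal A}$ whose objects are pairs $(\Sigma(\phi),a_X)$ with $\O(\phi)(a_X)=a_{\io(\ast)}$. This is a cosmetic gap, not a substantive one.
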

To prove this, we  check the Isomorphism condition, the Hereditary condition, and the Size condition. These are a bit technical and the reader not interested in these details may skip ahead.

\begin{rmk} This theorem holds in the Cartesian and non--Cartesian case with the modifications in the construction given in \S\ref{noncart1sec}, and those of \S\ref{noncart2sec}.

\end{rmk}
\begin{proof}

\textbf{Isomorphism Condition}: Let $\jmath:Iso(\F) \to \V^{\otimes}$ be a quasi-inverse of $\io^{\otimes}$, which we called a choice of basis. We will show that this induces a quasi--inverse $\jmath_{dec\O}$. 
Take $\otimes_{v \in V}(\ast_v,a_{\ast_v}) \in \Vpairten$ where $V$ is a finite indexing set. We have $\iopairten(\otimes_{v \in I}(\ast_v,a_{\ast_v})) = \otimes_{v \in V}(\io(\ast_v),a_{\ast_v}) \in \Fpair$. Take $X \in \Ob(\F)$. Since $\jmath$ is fixed, $X$ has a decomposition $X \cong \io^{\otimes}\jmath(X) = \otimes_{v \in V} \io(\ast_v)$. This gives a natural isomorphism between the identity functor on $\F$ and $\io^{\otimes}\jmath$. Call the components of this transformation $\xi_X:X \to \otimes_{v \in V} \io(\ast_v)$. 
Applying $\O$ we obtain morphisms $\Op(\xi_X):\Op(X) \to \Op(\otimes_{v \in V} \io(\ast_v))$. Let $a_{\otimes \io(\ast_v)} := \Op(\xi_X)(a_X)$ and $\otimes_{v \in V} a_{\io(\ast_v)} := \tau^{-1}(a_{\otimes \io(\ast_v)})$. We define $\jmath_{dec\Op}:\Fpair \to \Vpairten$ by $\jmath_{dec\Op}(X,a_X) = \otimes_{v \in V}(\ast_v,a_{\ast_v})$. Then we have
$$\iopairten \jmath_{dec\Op}(X,a_X) = \iopairten(\otimes_{v \in I}(\ast_v,a_{\ast_v})) = \otimes_{v \in I}(\io(\ast_v),a_{\ast_v}) = (\otimes_{v \in I} \io(\ast_v),a_{\otimes \io(\ast_v)}),$$
and $\xi_{(X,a_X)}:(X,a_X) \to \otimes_{v \in I}(\io(\ast_v),a_{\ast_v})$ is a component of a natural transformation from the identity on $\Fpair$ to $\iopairten \jmath_{dec\Op}$, proving the harder part of the equivalence.
The other direction follows from the same type of argument.

\textbf{Hereditary Condition}: This is proved by defining quasi--inverses $K$ and $L$. An object in $\Iso\Comma{\Fpair}{\Vpair}^{\otimes}$ is an arrow
\begin{equation}
\label{daggereq}
\otimes_{v \in I}\phi_v:\otimes_{v \in I}(X_v,a_{X_v}) \to \otimes_{v \in I}(\io(\ast_v),a_{\io(\ast_v)})
\end{equation}
such that each $\phi_v:(X_v,a_{X_v}) \to (\io(\ast_v),a_{\io(\ast_v)})$ is an isomorphism. Define $K:\Iso\Comma{\Fpair}{\Vpair}^{\otimes} \to \Iso\Comma{\Fpair}{\Fpair}$ on objects so that $K$ maps (\ref{daggereq}) to
$$\phi = \otimes_{v \in I}\phi_v:(\otimes_{v \in I}X_v,a_{\otimes X_v}) \to (\otimes \io(\ast_v),a_{\otimes \io(\ast_v)})$$
where $a_{\otimes X_v} = \tau(\otimes_{v \in I} a_{X_v})$ and similarly for $a_{\otimes \io(\ast_v)}$. This is well-defined, since
\begin{align*}
\Op(\phi)(a_{\otimes X_v}) = \Op(\otimes \phi_v)(\tau(\otimes a_{X_v})) &= \tau(\otimes \Op(\phi_v)(\otimes a_{X_v}))\\
&= \tau(\otimes \Op(\phi_v)(a_{X_v}))\\
&= \tau(\otimes a_{\io(\ast_v)})\\
&= a_{\otimes \io(\ast_v)}
\end{align*}
where the second equality follows by the naturality of $\tau$. Notice that $K$ amounts to passing the tensor product inside and viewing the result as a single object rather than a product of objects. We define $K$ so that it is the identity on morphisms of $\Iso\Comma{\Fpair}{\Vpair}^{\otimes}$.
\par
Recall from \ref{condii} that the hereditary condition means any morphism $\phi:X \to Y$ in $\F$ can be factored in a specific way. We denote this factorization $\hat{\phi}:X \to Y$. We define $L:\Iso\Comma{\Fpair}{\Fpair} \to \Iso\Comma{\Fpair}{\Vpair}^{\otimes}$ on objects so that it maps $\phi:(X,a_X) \to (Y,a_Y)$ to
$$\xymatrix{(\otimes_{v \in I}\io(\ast_v),\tau(\otimes_{v \in I} a_{\io(\ast_v)})) \ar[r]^-{\cong}_-{\xi_X^{-1}} & (X,a_X) \ar[r]^-{\hat{\phi}} & (Y,a_Y) \ar[r]^-{\cong}_-{\xi_Y} & (\otimes_{w \in J}\io(\ast_w),\tau(\otimes_{w \in J}a_{\io(\ast_w)}))}.$$
Here $\xi$ is as in our proof of the isomorphism condition. Denote this composition by $\tilde{\phi}:(\otimes\io(\ast_v),a_{\otimes\io(\ast_v)}) \to (\otimes\io(\ast_w),a_{\otimes\io(\ast_w)})$. On morphisms, $L$ sends the left diagram to the right diagram below:
$$\xymatrix{&&& (\otimes\io(\ast_v),a_{\otimes\io(\ast_v)}) \ar@{-->}[r]^-{\hat{f}} \ar[d]^{\cong} \ar@/_3pc/[ddd]_{\tilde{\phi}} & (\otimes\io(\ast_u),a_{\otimes\io(\ast_u)}) \ar[d]^{\cong} \ar@/^3pc/[ddd]^{\tilde{\psi}}\\
(X,a_X) \ar[r]^{f} \ar[d]_{\phi} & (X',a_{X'}) \ar[d]^{\psi} && (X,a_X) \ar[r]^-{f} \ar[d]^{\hat{\phi}} & (X',a_{X'}) \ar[d]^{\hat{\psi}}\\
(Y,a_Y) \ar[r]^{g} & (Y',a_{Y'}) && (Y,a_Y) \ar[r]^-{g} \ar[d]^{\cong} & (Y',a_{Y'}) \ar[d]^{\cong}\\
&&& (\otimes\io(\ast_w),a_{\otimes\io(\ast_w)}) \ar@{-->}[r]^-{\hat{g}} & (\otimes\io(\ast_t),a_{\otimes\io(\ast_t)})
}$$
Here $\hat{f}$ and $\hat{g}$ are the morphisms that make the top and bottom squares commute. The right hand diagram condenses to
$$\xymatrix{(\otimes\io(\ast_v),a_{\otimes\io(\ast_v)}) \ar[r]^-{\hat{f}} \ar[d]_{\tilde{\phi}} & (\otimes\io(\ast_u),a_{\otimes\io(\ast_u)}) \ar[d]^{\tilde{\psi}} \\
(\otimes\io(\ast_w),a_{\otimes\io(\ast_w)}) \ar[r]^-{\hat{g}} & (\otimes\io(\ast_t),a_{\otimes\io(\ast_t)})}$$
We must show that $K$ and $L$ are quasi-inverse to each other. Applying $KL$ to $\phi:(X,a_X) \to (Y,a_Y)$ gives $\tilde{\phi}:(\otimes\io(\ast_v),a_{\otimes\io(\ast_v)}) \to (\otimes\io(\ast_w),a_{\otimes\io(\ast_w)})$, and these arrows are isomorphic via the pair $(\xi_X,\xi_Y)$. Now applying $LK$ to $\otimes\phi_v:\otimes(X_v,a_{X_v}) \to \otimes (\io(\ast_v),a_{\io(\ast_v)})$ yields $\tilde{\phi}:(\otimes\io(\ast_v),a_{\otimes\io(\ast_v)}) \to (\otimes\io(\ast_w),a_{\otimes\io(\ast_w)})$, and these are isomorphic via $\xi$ as well.
\\\\
\textbf{Size Condition}: Fix $(\ast,a_{\io(\ast)}) \in \Vpair$. We must show that $\Comma{\Fpair}{(\ast,a_{\io(\ast)})}$ is essentially small. Let $\mathcal{A}$ be a small category that is equivalent to $\Comma{\F}{\ast}$ with equivalences $\Theta:\mathcal{A} \to \Comma{\F}{\ast}$ and $\Sigma:\Comma{\F}{\ast} \to \mathcal{A}$. An arrow in $\Comma{\Fpair}{(\ast,a_{\io(\ast)})}$ is of the form
$$\xymatrix{(X,a_X) \ar[rr]^-{f} \ar[rd]_{\phi} && (Y,a_Y) \ar[ld]^{\psi}\\
& (\io(\ast),a_{\io(\ast)}) &}$$
and from this we obtain an arrow in $\Comma{\F}{\ast}$
$$\xymatrix{X \ar[rr]^{f} \ar[rd]_{\phi} && Y \ar[ld]^{\psi}\\
& \io(\ast) &}$$
Define a category $\tilde{\mathcal{A}}$ as follows: Let
$$\Ob(\tilde{\mathcal{A}}) = \times_{\Sigma(\phi) \in \Sigma(\Ob\Comma{\F}{\ast})} \Op(\phi)^{-1}(\Op(\io(\ast))),$$
that is, pairs $(\Sigma(\phi),a_X)$ where $\Op(\phi)(a_X) = a_{\io(\ast)}$. For the morphisms of $\tilde{\mathcal{A}}$, between two pairs $(\Sigma(\phi),a_X)$ and $(\Sigma(\psi),a_Y)$, we have all morphisms $\Sigma(f)$ where $f:X \to Y$ with $\Op(f)(a_X) = a_Y$. Notice that with these definitions, the collections of objects and morphisms of $\tilde{\mathcal{A}}$ are sets, so that $\tilde{\mathcal{A}}$ is small. We can now define a functor $\tilde{\Sigma}:\Comma{\Fpair}{a_{\io(\ast)}} \to \tilde{\mathcal{A}}$ by $\tilde{\Sigma}(\phi) = (\Sigma(\phi),a_X)$ for objects where $\phi:(X,a_X) \to (\io(\ast),a_{\io(\ast)})$ and $\tilde{\Sigma}(f) = \Sigma(f)$ for morphisms. The quasi-inverse $\tilde{\Theta}:\tilde{\mathcal{A}} \to \Comma{\Fpair}{a_{\io(\ast)}}$ sends a pair $(\Sigma(\phi),a_X)$ to the morphism $\phi:(X,a_X) \to (\io(\ast),a_{\io(\ast)})$. It sends a morphisms $\Sigma(f)$ to $f$. We these definitions, it is clear that $\tilde{\Sigma}$ and $\tilde{\Theta}$ are quasi-inverse. Hence $\Comma{\Fpair}{\io(\ast)}$ is essentially small.
\end{proof}

\subsubsection{Examples}
Examples are abundant. A motivating example if $\FF=\operads$ is the Feynman category for operads and $\O$ the associative operad;
then $\FFdeco$ is the Feynman category for non--Sigma operads.
Similarly, one obtains the Feynman categories for non--Sigma cyclic and non--Sigma modular operads by decorating with the appropriate versions of the associative operad.   These are related by pushforwards as we explain in general in \S\ref{pushforwardsec}.
More details are in section \S \ref{examplepar}.

Another type of example appears if we consider Table \ref{table1}. Here the entries are obtained by two procedures, restricting the sets of morphisms and decorating.  This is dealt with in section \S\ref{decosec}.

\subsection{Non--Cartesian case}
In the non--Cartesian case, we have to be a little more careful.  There are two constructions. 

First,  if $\C$ is not Cartesian,
  everything goes through, except condition (i), since $a_X$ does not need to be decomposable.
The addition to the non--Cartesian construction is a choice of such a decomposition. If $\F$ is strict, i.e.\  $Iso(\F)=\V^{\otimes}$, this means that one simply decorates each vertex $*_v$ of $X$ separately.
In the non--strict case one has to choose a base functor $\jmath$. The construction depends on a choice of $\jmath$ only up to equivalence.

This construction is needed for $CycLie$ since operad naturally lives in $k$--$Vect$. Our definition below then recaptures the decorations used in
\cite{threegeometriesGefandseminars,ConanVogtman}.

Secondly, if $\F$ and $\C$ are enriched and satisfy further compatibilities, there is another convenient construction, that is not needed for the examples in this paper, but is general and will be used
in further study. This is outlined below.

\subsubsection{Non--Cartesian version: decoration by decomposed elements}
\label{noncart1sec}

The key observation is that after choosing $\jmath$, for $\jmath(X)=\bigotimes_{v\in I}*_v=\imath(\ast_{v_1})\odo\imath(\ast_{v_{|X|}})$
and $\phi_X:\bigotimes_{v\in I}\imath(*_v) \stackrel{\sim}{\to} X$ given by the equivalence condition (i),
 there is a chain of morphisms:

\begin{multline}
\label{ncartdageq}
Hom_{\C}(\unit,\O(\imath(*_{1})))\times \dots \times Hom_{\C}(\unit,\O(\imath(*_{|X|})))\stackrel{\otimes^{|X|-1}}\to\\
  Hom_{\C}(\unit\odo \unit, \bigotimes_{i=1}^{|X|} \O(\imath *_i))
    \stackrel{\mu^{|X|-1} \otimes \O(\phi_X)}{\longrightarrow}Hom_{\C}(\unit,\O(X))
\end{multline}
where $\mu:\unit\otimes \unit \to \unit$ is the multiplication induced by the unit constraint.
The second morphism is always an isomorphism. The first morphism is an isomorphism  if $\C$ is Cartesian, but not in general.
The first space is however the decoration for $\V^{\otimes}$. The elements of $\Vdeco$ are $(\ast_v,a_v\in \O(\imath(\ast_v)))$
and the free symmetric monoidal $\Vdeco^{\otimes}$ category has  objects
$\bigotimes_{v\in I} (*_v, a_v\in \O(*_v))$.
%=(\bigotimes_{v\in I} *_v,  \prod_v a_v\in \O(*_v))$.
 These
are decomposable elements, with a given decomposition. 
The link to the previous discussion is  the natural map 
$(a_1,\dots,a_{|X|})\mapsto a_1\odo a_{|X|}\in \O(X)$.  
Pre--composing the decoration with this map give the composition along a morphism $X\to \imath(\ast)$ and by condition (ii) along any morphism. This map is also symmetric monoidal.
In the Cartesian case, the map is an isomorphism. 

Concretely:

\begin{df} For a fixed choice of  $\jmath$:
The objects of $\Fdeco$ are  tuples $(X, a_{v_1},\dots,a_{v_{|X|}})$, where 
for $\jmath(X)=\bigotimes_{v\in I}\ast_v=\ast_{v_1}\odo \ast_{v_{|X|}}$, $a_{v_i}\in \O(\ast_{v_i})$.

The morphism of $\Fdeco$ are given by the subset 
$$Hom_{\Fdeco}((X, a_{w_1},\dots,a_{w_{|X|}}),(Y,b_{v_1},\dots,b_{v_{|Y|}}))\subset Hom_{\F}(X,Y)$$
of those morphisms $\phi:X\to Y$, such that if $\bigotimes_v \phi_v$ is the decomposition of $\phi$ according to 
the diagram \eqref{morphdecompeq2}, with $\phi_v:X_v=\bigotimes_{w\in I_v}\imath(*_w)\to \imath(\ast_v)$,  then $\O(\phi_v)(\bigotimes_{w\in I_v}(a_w))=b_v$.

The  monoidal structure is given by $$(X, a_{w_1},\dots,a_{w_{|X|}})\otimes (Y,b_{v_1},\dots,b_{v_{|Y|}}))=
(X\otimes Y, a_{w_1},\dots,a_{w_{|X|}},b_{v_1},\dots,b_{v_{|Y|}})$$ and the commutativity constraints are given by those of $\F$ on the first component and the respective permutations on the others.
\end{df}

\begin{thm}
The construction depends on $\jmath$ only up to an equivalence; that is different choices of $\jmath$ yield equivalent categories $\Fdeco$.

In the case of Cartesian $\C$. $\Fdeco$ coincides, again up to equivalence, with the previous definition. 
Furthermore   Theorem \ref{theorem1}, Theorem \ref{diagramthm} and Theorem \ref{decothm} hold analagously.
\end{thm}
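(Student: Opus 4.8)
The statement has three assertions: (a) independence of the choice of $\jmath$ up to equivalence; (b) agreement with the Cartesian construction of \S\ref{noncart1sec} (equivalently \S\ref{enrichedpar}) when $\C$ is Cartesian; and (c) the analogues of Theorems \ref{theorem1}, \ref{diagramthm}, \ref{decothm}. I would treat them in that order, as (a) is the genuine content and (b), (c) are largely bookkeeping once (a) is in hand.

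\emph{Step 1: independence of $\jmath$.} Fix two choices $\jmath,\jmath'$ of base functor, giving categories $\Fdeco^{\jmath}$ and $\Fdeco^{\jmath'}$. For each $X$, the discussion of Condition (i) (``Change of base'') supplies a canonical bijection $\psi_X\colon I\to I'$ of index sets together with isomorphisms $\phi_v\in Hom_{\V}(\ast_v,\ast'_{\psi_X(v)})$ fitting into the commuting triangle displayed after \eqref{objdecompeq}. I would define a functor $\Phi\colon \Fdeco^{\jmath}\to\Fdeco^{\jmath'}$ by $\Phi(X,a_{v_1},\dots,a_{v_{|X|}})=(X,\O(\io(\phi_1))(a_{v_1}),\dots)$, reindexed by $\psi_X$, and the identity on the underlying morphism in $\F$. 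The key points to verify are: (i) $\Phi$ lands in $\Fdeco^{\jmath'}$, i.e.\ a morphism satisfying the $\jmath$-decoration condition on its decomposition $\bigotimes_v\phi_v$ of \eqref{morphdecompeq2} also satisfies the $\jmath'$-condition --- this follows because any two decompositions of a morphism are related by a unique isomorphism in $(\io^{\otimes}\downarrow\io^{\otimes})$ (the ``uniqueness'' part of Condition (ii) recalled in \S\ref{condii}), and $\O$ applied to that isomorphism intertwines the two decoration constraints; (ii) $\Phi$ is monoidal, since the change-of-base isomorphisms are compatible with $\otimes$; (iii) $\Phi$ is an equivalence, with quasi-inverse built from the reverse change of base, the composite being naturally isomorphic to the identity via the $\xi_X$-type coherence isomorphisms. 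I would also note that a second pair of choices changes $\Phi$ only by a monoidal natural isomorphism, so the equivalence class is canonical.

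\emph{Step 2: the Cartesian comparison.} When $\C$ is Cartesian, the first map in \eqref{ncartdageq} is an isomorphism, so the tuple $(a_{v_1},\dots,a_{v_{|X|}})$ is the same data as a single element $a_X:=a_{v_1}\odo a_{v_{|X|}}\in\O(X)$ (after transport along $\O(\phi_X)$). I would define a functor from the $\jmath$-dependent $\Fdeco$ of \S\ref{noncart1sec} to the $\Fdeco$ of \S\ref{enrichedpar} sending $(X,a_{v_1},\dots,a_{v_{|X|}})\mapsto(X,a_X)$, identity on morphisms, and check it is well defined on morphisms: the remark in \S\ref{noncart1sec} (``pre-composing the decoration with this map gives composition along a morphism $X\to\io(\ast)$ and by (ii) along any morphism'') is exactly the statement that $\O(\phi)(a_X)=a_Y$ iff $\O(\phi_v)(\bigotimes_{w\in I_v}a_w)=b_v$ for all $v$ in the decomposition \eqref{morphdecompeq2}; here Cartesianness (projections $\O(Y)\to\O(\io(\ast_v))$) is what lets one pass from the componentwise condition to the global one and back. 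Fullness, faithfulness and essential surjectivity are then immediate, and monoidality is clear from the definitions of $\otimes_{\Fdeco}$ on both sides.

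\emph{Step 3: the three theorems.} For Theorem \ref{theorem1}: the Isomorphism condition now holds \emph{by construction}, since $\Vdeco^{\otimes}$ has objects $\bigotimes_v(\ast_v,a_v)$ and these map onto exactly the tuples defining $\Ob(\Fdeco)$ --- the quasi-inverse $\jmath_{dec\O}$ is read off directly from $\jmath$, no decomposition of elements being required. The Hereditary and Size conditions go through verbatim as in the proof of Theorem \ref{theorem1}, replacing ``$a_X\in\O(X)$ with $\O(\phi)(a_X)=a_Y$'' by ``tuple $(a_{v_i})$ with $\O(\phi_v)(\bigotimes a_w)=b_v$'' throughout; the constructions $K,L$ and $\tilde\Sigma,\tilde\Theta$ are formally the same because they only ever manipulate the underlying morphisms in $\F$ together with the (now tuple-valued) decoration, and naturality of $\tau$ was the only analytic input. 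For Theorems \ref{diagramthm} and \ref{decothm} I would observe that all the functors in the squares \eqref{maindiageq}, \eqref{reldiageq} and the adjunctions are defined at the level of underlying objects/morphisms plus transported decorations, so they lift to the tuple-decorated categories unchanged once a compatible choice of $\jmath$'s is fixed on each side (and independence of $\jmath$ from Step 1 shows the diagrams commute up to the required natural isomorphism regardless of the choices); the pushforward-of-the-final-functor computation in Theorem \ref{decothm} is identical, since the terminal $\O$-decoration assigns to each vertex the unique/identity element and the tuple description of $forget_*(final)$ recovers $\O$ componentwise.

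\textbf{Main obstacle.} The crux is Step 1, specifically checking that the $\jmath$-to-$\jmath'$ change of base is compatible with the \emph{morphism} decoration conditions: one must reconcile two different decompositions \eqref{morphdecompeq2} of the same morphism $\phi$, which requires invoking the uniqueness of the connecting isomorphism in $(\io^{\otimes}\downarrow\io^{\otimes})$ from \S\ref{condii} and verifying that $\O$ carries it to an isomorphism intertwining $\bigotimes_w a_w\mapsto b_v$ with the primed version --- i.e.\ a coherence bookkeeping argument tracking permutations $P_\psi$ and the isomorphisms $\sigma_v\colon X_v\to X'_{\psi(v)}$. Everything else is a routine transcription of the already-proven Cartesian case.
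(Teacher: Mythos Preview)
Your proposal is correct and follows the same approach the paper takes; indeed, the paper's own proof is a three-sentence sketch (``The first fact is clear. The second follows from the fact that in the Cartesian case the composite map of \eqref{ncartdageq} is an isomorphism. The theorems follow from a straightforward modification of their proofs.''), and your Steps 1--3 simply unpack those three sentences with the expected details. One minor quibble: in Step 2 you refer to ``the Cartesian construction of \S\ref{noncart1sec} (equivalently \S\ref{enrichedpar})'', but the ``previous definition'' meant in the theorem is the original construction of $\Fdeco$ with objects $(X,a_X)$ given at the start of Section 2, not the enriched version; this does not affect your argument, which correctly hinges on the composite \eqref{ncartdageq} being an isomorphism.
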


\begin{proof}
The first fact is clear. The second follows for the fact that in the Cartesian case the composite map of \eqref{ncartdageq} is an isomorphism. The theorems  
follow from a straightforward modification of their proofs.
\end{proof}

This is the construction that underlies the Lie construction in \cite{threegeometriesGefandseminars,ConanVogtman}: i.e.\ decorate every vertex of a graph
with an element of cyclic Lie.

\subsubsection{Non--Cartesian case and enrichment}
\label{noncart2sec}

To allow tensors that are possibly not pure, i.e.\ non--decomposable tensors, as decoration, e.g.\ in the case that $\C$ is $k$--linear,  assume that $\F$ is a subcategory of a monoidal category $\CalE$ and is enriched over $\CalE$. Furthermore  assume that $\C$ is tensored over $\CalE$.
Then one can  use the definition of objects
of $\Fdeco$ as $X\otimes \O(X)$, with $X$ an object of $\F$. Taking note that now $\V^{\otimes}$ is the free symmetric monoidal category tensored over $\CalE$ condition (i) will hold.
The full details are beyond the scope and aim of this paper. Nevertheless, we wish to provide a road map.

\begin{df}Assuming the conditions above,
the category $\Fdeco$ as an enriched category over $\CalE$ has objects $X\otimes \O(X)$ and formally the same set of morphisms $\F$ ---$Hom_{\Fdeco}(X\otimes \O(X),Y\otimes \O(Y)=Hom_{\F}(X,Y)$. A morphisms $\phi$ via tensoring  becomes the morphism $\phi\otimes \O(\phi)$ in  $\C$.
Its symmetric monoidal structure is given by $(X\otimes \O(X))\otimes_{\Fdeco} (Y\otimes \O(Y))=(X\otimes_\F Y)\otimes \O(X\otimes Y)$, with composition of morphisms and symmetries given by the isomorphism 
$(X\otimes Y)\otimes \O(X\otimes Y)\simeq (X\otimes \O(X))\otimes (Y\otimes \O(Y))$ in $\C$ provided by the strong symmetric monoidal structure of $\O$.
 $\Vdeco$ is likewise defined by objects $(V \otimes \O(\imath(V))$ with $V\in \V$ and the morphisms of $\V$. The inclusion is given by 
 $\imath (V\otimes \O(\imath (V)))=(\imath(V)\otimes \O(\imath(V)))$.
\end{df}

\begin{rmk}
Passing to elements, we recover the initial construction in the Cartesian case and possibly non--pure tensors in the non--Cartesian case.
This follows from the fact that the arrows on category of elements are given by commutative diagrams.
$$
\xymatrix{
\unit \ar[r]^-{a_x} \ar[dr]_{a_y}&X\otimes (\O(X))\ar[d]^{\phi\otimes \O(\phi)}\\
&Y\otimes \O(Y)
}
$$
\end{rmk}

Although it is straightforward  to prove the following claim using the enriched version of Feynman categories, it is outside the scope of this  paper and we defer the proof to a subsequent analysis.
\begin{claim} The enriched version of 
$\Fdeco$  with $\Vdeco$ given by $(\imath(V)\otimes \O(\imath(V))$ is an enriched Feynman category. Its elements yield the construction above for the Cartesian case. 
Furthermore the  Theorem \ref{theorem1}, Theorem \ref{diagramthm} and Theorem \ref{decothm} hold analagously.
\end{claim}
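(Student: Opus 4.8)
The plan is to verify, for the triple $(\Vdeco,\Fdeco,\imath_{dec\O})$ of the preceding definition, the axioms of an enriched Feynman category in the form of Lemma~\ref{altlem}: the isomorphism condition~(i) in its enriched reading (with ``$\Vdeco$ is a groupoid'' replaced by the free--groupoid--via--adjunction condition), the hereditary condition~(ii'), and the size condition~(iii). The guiding principle is that the element--wise manipulations in the proof of Theorem~\ref{theorem1} should be replaced by morphisms in $\CalE$ and by formal manipulations of representable presheaves, and the decisive simplification is that here the $\CalE$--hom objects of $\Fdeco$ and of $\Vdeco$ \emph{literally coincide} with those of $\F$ and $\V$ --- the decoration has been absorbed entirely into the objects $X\otimes\O(X)$ --- so that one only has to check compatibility of the object relabelling $X\mapsto X\otimes\O(X)$ with the structures in play.

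First I would record the enriched symmetric monoidal structure on $\Fdeco$: since $\O$ is strong symmetric monoidal and $\C$ is tensored over $\CalE$ with the action distributing over the monoidal product of $\CalE$, the relabelling $X\mapsto X\otimes\O(X)$ carries $\otimes_{\F}$ to $\otimes_{\Fdeco}$, and $\O$ of the associativity and commutativity constraints of $\F$ supply those of $\Fdeco$; this is the enriched analogue of the two Lemmas preceding Theorem~\ref{theorem1} and is routine. For condition~(i) one uses that $\O$ being strong monoidal provides, for any decomposition $X\cong\bigotimes_{v\in I}\imath(\ast_v)$, a canonical isomorphism
\[
X\otimes\O(X)\;\cong\;\Bigl(\bigotimes_{v\in I}\imath(\ast_v)\Bigr)\otimes\bigotimes_{v\in I}\O(\imath(\ast_v))\;\cong\;\bigotimes_{v\in I}\bigl(\imath(\ast_v)\otimes\O(\imath(\ast_v))\bigr),
\]
so a choice of basis $\jmath$ for $\F$ induces $\jmath_{dec\O}$ for $\Fdeco$ by applying this isomorphism; because the hom objects are unchanged, the adjunction witnessing that $\V^{\otimes}$ (now the free symmetric monoidal category tensored over $\CalE$ on $\V$) is equivalent to $Iso(\F)$ transports verbatim to $\Vdeco^{\otimes}$ and $Iso(\Fdeco)$, and one gets $|X\otimes\O(X)|=|X|$.

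Next I would treat~(ii') and~(iii). Under the object relabelling, the pullback of representable presheaves $\imath_{dec\O}^{\otimes\wedge}\colon[\Fdeco^{op},\Set]\to[\Vdeco^{\otimes op},\Set]$ is identified with $\imath^{\otimes\wedge}$ --- the representable on $X\otimes\O(X)$ corresponds to the one on $X$, with the same monoidal behaviour because $\otimes_{\Fdeco}$ is $\otimes_{\F}$ after relabelling --- whose restriction to representables is monoidal since $\FF$ is a Feynman category; and each comma category $(\Fdeco\downarrow(\imath(V)\otimes\O(\imath(V))))$ has objects and $\CalE$--hom objects in bijection with those of $(\F\downarrow\imath(V))$, hence is essentially small. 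That the elements recover the earlier construction is then exactly the content of the Remark following the definition: the underlying category of $\Fdeco$ has as its objects the data of an $X$ together with a decoration $a_X\in\O(X)$ (a point $\unit\to X\otimes\O(X)$, presented by the triangle displayed in that Remark) and as its morphisms those $\phi$ with $\O(\phi)(a_X)=a_Y$ --- in the Cartesian case this is the category $\Fdeco$ constructed before Theorem~\ref{theorem1}, while in general it permits non--pure decorating tensors. Finally, the analogue of Theorem~\ref{theorem1} is the first assertion of the claim, which is what has just been sketched; Theorems~\ref{diagramthm} and~\ref{decothm} follow because the functors occurring there ($forget$, $f^{\O}$, the $\sigma_{dec}$, and the pushforward of the final functor) are again given by object relabellings that leave the hom objects untouched and are natural in $\O$, so the diagram chases of the later sections apply unchanged.

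The hard part is not any single computation but putting the stage in order: one must commit to the precise enriched definition of $\Fdeco$ (its $\CalE$--enrichment, and the matching formulation of ``free groupoid defined by an adjunction'' for $\Vdeco$) and verify that the tensoring $\CalE\times\C\to\C$ is sufficiently coherent for the displayed decomposition of $X\otimes\O(X)$ to hold with all the required naturality. Once that framework is fixed, the verifications are, like the rest of the paper, straightforward but lengthy bookkeeping, which is why the full treatment is deferred.
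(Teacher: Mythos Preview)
The paper itself does \emph{not} prove this Claim: immediately before stating it, the authors write that the proof ``is outside the scope of this paper and we defer the proof to a subsequent analysis.'' So there is no paper proof to compare against; one can only assess your sketch on its own merits.

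Your central observation --- that by the paper's definition $Hom_{\Fdeco}(X\otimes\O(X),Y\otimes\O(Y))=Hom_{\F}(X,Y)$, so the $\CalE$--hom objects are literally unchanged and the decoration lives solely in the object relabelling $X\mapsto X\otimes\O(X)$ --- is exactly the right starting point, and it does make conditions (i), (ii') and (iii) essentially automatic, as you say. The treatment of ``elements'' via the commuting triangle in the Remark is also the intended mechanism for recovering the Cartesian construction.

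Where your sketch is too quick is the final sentence on Theorems~\ref{diagramthm} and~\ref{decothm}. If one takes the relabelling picture at face value, then $forget\colon\Fdeco\to\F$ is an \emph{isomorphism} of enriched symmetric monoidal categories, hence $forget_*$ is an equivalence and $forget_*(\text{final})\cong\text{final}$, not $\O$. So Theorem~\ref{decothm} cannot ``follow because the functors\dots are object relabellings that leave the hom objects untouched'': that line of reasoning collapses the statement. The content of Theorem~\ref{decothm} in this setting must live at the level of the category of elements (where the decoration genuinely enlarges the object set and $forget$ is no longer an equivalence), and it is there that the colimit computation of Theorem~\ref{decothm} has to be redone. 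You acknowledge in your last paragraph that ``putting the stage in order'' is the hard part; the specific thing that needs ordering is precisely this: a clean statement of what $\Fdeco\text{-}\opcat$ means in the enriched setup (enriched functors from $\Fdeco$ itself, or ordinary functors from its category of elements?), and a verification that the passage to elements intertwines with $forget$, $f^{\O}$, and the Kan extensions in the required way. Without that, the argument for the analogues of Theorems~\ref{diagramthm} and~\ref{decothm} is not yet an argument.
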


\begin{ex} Say $\C=k$--$Vect$ the category of $k$--vector spaces and let $\F$ be given by graphs.
Then we can first replace $\F$ by a category inside $k$--Vect by linearization, i.e.\ replace $X$ by the one dimensional vector space $k X$
and replace the morphisms $\phi$ again by their linearization, i.e. the  free vector spaces they generate.

Replacing $(X,a_X)$ for $X=\bigotimes_{i=1}^{|X|}\ast_i$ by $(X\otimes \O(X))$, and regarding elements means that we choose an element , i.e.\ a tensor, 
in $k\ast_1\otimes_k \dots\otimes_k k\ast_{|X|}\otimes_k \O(\ast_1)\otimes_k\dots\otimes_k \O(\ast_{|X|})\simeq  X\otimes \O(X)$.

This is the usual treatment for Feynman rules. It just means that we look at sums of pure decoations and the correlation functions are linear at all vertices.
\end{ex}

\section{Functoriality Theorem}
\subsection{Push-Forwards}
\label{pushforwardsec}
We will show that decoration behaves well with pushforwards.

Let $\Fe = (\V,\F,\io)$ be a Feynman category, let $\C$ be a fixed  symmetric monoidal category, and let $\Op$ be an $\F$-$\Op ps$ in $\C$.

Recall from \cite{feynman}  that Feynman categories form a category (actually a 2--category) and there is a pushforward (left Kan extension) for $\opcat$.
This pushforward realizes for instance the modular envelope or the PROP generated by an operad.
Consider another Feynman category $\Fe'$ and a functor $(v,f):\Fe \to \Fe'$, so that $f:\F \to \F'$ is a symmetric monoidal functor. From $f$ we get a functor, the pushforward
$$f_{\ast}:\F\text{-}\Op ps_{\C} \to \F'\text{-}\Op ps_{\C}$$
via
\begin{equation}
\label{colimeq}
f_{\ast}(\Op)(X') = colim_{\Comma{f}{X'}}\Op\circ P
\end{equation}
where $\Comma{f}{X'}$ is the comma category and $P$ is the projection functor $P:\Comma{f}{X'} \to \F$ given by
$$P(X,\ph:f(X) \to X') = X$$

\begin{thm} 
\label{diagramthm}
There is a functor $f^{\O}$
which makes the following diagram commutative 
\begin{equation}
\label{maindiageq}
\xymatrix{\Fepair \ar[r]^{f^{\Op}} \ar[d]_{forget} & \Fe'_{dec\, f_{\ast}(\Op)} \ar[d]^{forget'} \\
\Fe \ar[r]^f & \Fe'}
\end{equation}
Here the vertical arrows are the forgetful functors which forget the decorations:
$$f(forget(X,a_X)) = f(X) = forget'(f(X),\mu_X(a_X)) = forget'(f^{\Op}(X,a_X))$$

This functor is natural in $\O$, that is for any morphism $\O\to \P$ in $\FF\text{-}\opcat$ there is a diagram 
\begin{equation}
\label{reldiageq}
\xymatrix{\Fepair \ar[rr]^{\sigma_{dec}} \ar[dd]_{f^{\Op}} && \Fe_{dec\Po} \ar[dd]^{f^{\Po}} \\
&&\\
\Fe'_{decf_{\ast}(\Op)} \ar[rr]^{\sigma'_{dec}} && \Fe'_{decf_{\ast}(\Po)} }
\end{equation}
\end{thm}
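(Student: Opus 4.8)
The plan is to construct the functor $f^{\Op}\colon\Fepair\to\Fe'_{dec\,f_*(\Op)}$ explicitly on objects and morphisms, using the universal maps coming from the colimit defining $f_*(\Op)$, and then to check the two commutativity claims directly. First I would recall that for each object $X$ of $\F$, the object $X$ itself sits in the comma category $\Comma{f}{f(X)}$ via the identity $f(X)\xrightarrow{id}f(X)$, so there is a canonical structure map $\mu_X\colon\Op(X)=\Op\circ P(X,id)\to \colim_{\Comma{f}{f(X)}}\Op\circ P=f_*(\Op)(f(X))$. I would then \emph{define} $f^{\Op}(X,a_X)=(f(X),\mu_X(a_X))$ on objects, and on a morphism $\phi\colon(X,a_X)\to(Y,a_Y)$ in $\Fepair$ (i.e.\ $\phi\colon X\to Y$ in $\F$ with $\Op(\phi)(a_X)=a_Y$) I would set $f^{\Op}(\phi)=f(\phi)$. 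The key compatibility to verify is that the square relating $\mu_X$, $\mu_Y$, $\Op(\phi)$ and $f_*(\Op)(f(\phi))$ commutes; this is precisely the naturality of the cocone maps $\mu_\bullet$, since $\phi$ induces a morphism $(X,id)\to(Y,f(\phi))$... more carefully, one compares the cocones over $\Comma{f}{f(X)}$ and $\Comma{f}{f(Y)}$ along the functor induced by postcomposition with $f(\phi)$, and the colimit's universal property gives $f_*(\Op)(f(\phi))\circ\mu_X=\mu_Y\circ\Op(\phi)$. Hence $f_*(\Op)(f(\phi))(\mu_X(a_X))=\mu_Y(\Op(\phi)(a_X))=\mu_Y(a_Y)$, so $f(\phi)$ is indeed a morphism $(f(X),\mu_X(a_X))\to(f(Y),\mu_Y(a_Y))$ in $\Fe'_{dec\,f_*(\Op)}$. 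Functoriality of $f^{\Op}$ and its strong symmetric monoidality are then inherited from those of $f$ together with the coherence of the $\mu_X$ with $\otimes$; I would check that $\mu_{X\otimes Y}$ agrees with $\mu_X\otimes\mu_Y$ up to the structure isomorphisms of $f_*(\Op)$, using that $f_*(\Op)$ is strong monoidal and the comma categories behave well under $\otimes$.

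Having $f^{\Op}$ in hand, the commutativity of \eqref{maindiageq} is immediate: both composites send $(X,a_X)$ to $f(X)$ on the underlying object, and $forget'(f^{\Op}(X,a_X))=forget'(f(X),\mu_X(a_X))=f(X)=f(forget(X,a_X))$, and likewise on morphisms both give $f(\phi)$. The functor $v$ on vertices is handled the same way, so $(v,f)^{\Op}$ is a morphism of Feynman categories; that $\Fepair$ and $\Fe'_{dec\,f_*(\Op)}$ are Feynman categories is Theorem \ref{theorem1}, so this makes sense.

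For naturality in $\O$, given a morphism $\sigma\colon\O\to\P$ in $\FF\text{-}\opcat$ (a monoidal natural transformation), I would first define $\sigma_{dec}\colon\Fepair\to\Fe_{dec\Po}$ by $(X,a_X)\mapsto(X,\sigma_X(a_X))$ and $\phi\mapsto\phi$ — well-defined since naturality of $\sigma$ gives $\Po(\phi)(\sigma_X(a_X))=\sigma_Y(\Op(\phi)(a_X))=\sigma_Y(a_Y)$. Then $\sigma'_{dec}$ is the analogous functor built from the induced map $f_*(\sigma)\colon f_*(\Op)\to f_*(\Po)$, which exists by functoriality of $f_*$ (colimits are functorial in the diagram). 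Commutativity of \eqref{reldiageq} then reduces to the single identity $f_*(\sigma)_{f(X)}\circ\mu^{\Op}_X=\mu^{\Po}_X\circ\sigma_X$ of maps $\Op(X)\to f_*(\Po)(f(X))$, which follows because $f_*(\sigma)$ is by construction the map on colimits induced by the natural transformation $\sigma\circ P$ between the two diagrams $\Op\circ P$ and $\Po\circ P$ over $\Comma{f}{f(X)}$, so it is compatible with all the cocone legs, in particular the leg $\mu_X$ indexed by $(X,id)$.

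\textbf{Main obstacle.} The routine parts are the bookkeeping of the $\Fdeco$-style definitions; the one genuinely delicate point is establishing the naturality square for the structure maps $\mu_X$ — i.e.\ that $f_*(\Op)(f(\phi))\circ\mu_X=\mu_Y\circ\Op(\phi)$ — and, relatedly, the monoidal coherence of the family $\{\mu_X\}$. This requires being careful about how a morphism $\phi\colon X\to Y$ in $\F$ (which need not be an isomorphism) induces a functor $\Comma{f}{f(X)}\to\Comma{f}{f(Y)}$ by postcomposition with $f(\phi)$, how the two relevant diagrams $\Op\circ P$ compare along it, and then invoking the universal property of the colimit in $\C$; in the non-Cartesian enriched setting one must additionally ensure these colimits and the tensor interact as required, which is exactly the hypothesis imposed on $\C$ in the Convention section. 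Once that naturality is pinned down, everything else is formal diagram-chasing.
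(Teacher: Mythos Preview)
Your proposal is correct and follows essentially the same route as the paper: define $f^{\Op}(X,a_X)=(f(X),\mu_X(a_X))$ and $f^{\Op}(\phi)=f(\phi)$ using the cocone leg $\mu_X$ at $(X,id)$, verify $f_*(\Op)(f(\phi))\circ\mu_X=\mu_Y\circ\Op(\phi)$ from the colimit diagram, and handle $\sigma_{dec}$, $\sigma'_{dec}$ and the square \eqref{reldiageq} via the induced map $f_*(\sigma)$ and its compatibility with the cocone legs. If anything you are slightly more explicit than the paper about the monoidal coherence of the $\mu_X$ and about the postcomposition functor between comma categories, but there is no substantive difference in strategy.
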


\begin{cor} 
\label{functorcor}
On the level of $\opcat$ we get the following diagram: 
\begin{equation}
\label{functoreq}
\xymatrix{\Fpair\text{-}\opcat \ar@/^/[r]^{f^{\Op}_*}\ar@/_/[d]_{forget_*} & \F'_{dec\,f_{\ast}(\Op)}\text{-}\opcat   \ar@/^/[l]^{f^{\Op }*}\ar@/^/[d]^{forget'_*} \\
\F\text{-}\opcat \ar@/^/[r]^{f_*}\ar@/_/[u]_{forget^*} & \F'\text{-}\opcat\ar@/^/[u]^{forget'^*}\ar@/^/[l]^{f^*} }
\end{equation}
\end{cor}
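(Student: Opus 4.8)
The plan is to obtain the diagram \eqref{functoreq} purely formally from the diagram of Feynman categories in \eqref{maindiageq} of Theorem \ref{diagramthm}, by applying the functoriality of the pushforward construction $(-)_*$ on $\opcat$ and invoking the general fact from \cite{feynman} that a morphism of Feynman categories induces an adjoint pair $(g_*, g^*)$ between the associated $\opcat$-categories, with $g^*$ being restriction along $g$ and $g_* = \mathrm{Lan}_g$ its left adjoint as in \eqref{colimeq}. First I would recall that \eqref{maindiageq} is a commuting square in the $2$-category of Feynman categories, with horizontal arrows $f$ and $f^{\Op}$ and vertical arrows $forget$ and $forget'$. Applying the (pseudo)functor that sends a Feynman category $\Fe$ to $\F\text{-}\opcat_{\C}$ and a morphism $g$ to the adjunction $\adj{g_*}{}{}{g^*}$, each edge of the square of Feynman categories produces an adjoint pair, yielding exactly the four adjoint pairs displayed in \eqref{functoreq}: $(f^{\Op}_*, f^{\Op *})$ along the top, $(f_*, f^*)$ along the bottom, $(forget_*, forget^*)$ on the left, and $(forget'_*, forget'^*)$ on the right.

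Next I would address commutativity of the resulting square of functors. Because \eqref{maindiageq} commutes, i.e.\ $f \circ forget = forget' \circ f^{\Op}$ as morphisms of Feynman categories, the induced restriction functors satisfy $forget^* \circ f^* \cong (f \circ forget)^* = (forget' \circ f^{\Op})^* \cong f^{\Op *}\circ forget'^*$ on the nose (restriction is strictly functorial), and dually, by uniqueness of left adjoints, the pushforwards satisfy $f^{\Op}_* \circ forget_* \cong forget'_* \circ f_*$ up to canonical natural isomorphism. This is the only place any argument is needed, and it is the standard ``left adjoints compose'' bookkeeping: given $(g h)^* = h^* g^*$, the left adjoint of $h^* g^*$ is $g_* h_*$, and it is also $(gh)_* = \mathrm{Lan}_{gh}$, so these agree canonically. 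I would also remark that the naturality square \eqref{reldiageq} in $\O$ from Theorem \ref{diagramthm}, after applying $(-)_*$, gives the compatibility of these adjoint pairs with the maps induced by a morphism $\O\to\P$, though this is not strictly required for the statement of the corollary as displayed.

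The main (and essentially only) obstacle is a size/existence check: the pushforward $g_* = \mathrm{Lan}_g$ along a morphism of Feynman categories exists as a functor $\F\text{-}\opcat_{\C}\to\F'\text{-}\opcat_{\C}$ provided the relevant comma categories $\Comma{g}{X'}$ are essentially small and $\C$ is cocomplete with monoidal product commuting with colimits, which are exactly the standing hypotheses recalled in the Convention and built into the definition via the Size condition (iii); for the edge $f^{\Op}$ one needs this for $\Fepair$ and $\Fe'_{dec\, f_*(\Op)}$, which is guaranteed by Theorem \ref{theorem1} applied to both, and for $forget$, $forget'$ it is the original Feynman-category hypothesis. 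Thus no genuinely new work is required beyond citing Theorem \ref{diagramthm}, Theorem \ref{theorem1}, and the pushforward formalism of \cite{feynman}; the corollary follows by applying that formalism edge-by-edge to the commuting square \eqref{maindiageq}.
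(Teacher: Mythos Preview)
Your proposal is correct and matches the paper's approach: the paper gives no separate proof of this corollary, treating it as an immediate consequence of Theorem~\ref{diagramthm} together with the general adjunction $(g_*,g^*)$ for morphisms of Feynman categories established in \cite{feynman}, which is precisely the formalism you spell out. Your additional remarks on commutativity of the pullback and pushforward squares and on the size/existence checks are accurate and go slightly beyond what the paper states explicitly, but they are the natural justification the reader is meant to supply.
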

\begin{rmk}
This diagram is the full generalization of the similar diagram in \cite{marklnonsigma} to arbitrary decorations and arbitrary morphisms, not just inclusions, see \S \ref{examplepar}.
\end{rmk}
The following observation is straightforward:
\begin{prop}
\label{trivlprop}
It $\final$ is a terminal object in $\FF\text{-}\opcat$ 
then $\FF_{dec\final}=\FF$
 and (\ref{maindiageq}) is a special case of (\ref{reldiageq}). \qed
 \end{prop}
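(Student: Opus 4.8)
The plan is to verify the two claims in turn, both of which reduce to unwinding the definitions of \S\ref{pushforwardsec} and of $\Fdeco$. For the first claim, recall that a terminal object $\final$ in $\FF\text{-}\opcat$ is the constant functor sending every object of $\F$ to the terminal object $\ast_{\C}$ of $\C$ (which exists and is the monoidal unit in the Cartesian case we are working in) and every morphism to the identity; terminality forces this because $\Hom_{\C}(-,\ast_{\C})$ is a singleton. First I would observe that then $\final(X)=\ast_{\C}$ is a one-point set for every $X$, so the only possible decoration $a_X\in\final(X)$ is the unique point $\ast_X$. Hence the objects of $\Fdeco[\final]$ are pairs $(X,\ast_X)$ in canonical bijection with objects $X$ of $\F$. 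Next I would check morphisms: $\Hom_{\Fdeco[\final]}((X,\ast_X),(Y,\ast_Y))$ consists of those $\phi\in\Hom_{\F}(X,Y)$ with $\final(\phi)(\ast_X)=\ast_Y$, but $\final(\phi)=\mathrm{id}$ and $\final(Y)$ is a singleton, so this condition is vacuous and the hom-set equals $\Hom_{\F}(X,Y)$. The monoidal product, unit, and the associativity/commutativity constraints on $\Fdeco[\final]$ restrict those of $\F$ on the first coordinate and act trivially on the (unique) second coordinate, so the forgetful functor $(X,\ast_X)\mapsto X$ is an isomorphism of symmetric monoidal categories, and it plainly intertwines $\ideco[\final]$ with $\imath$ on the decorated vertex groupoid. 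Therefore $\FF_{dec\final}=\FF$, the identification being strict (or at least an equivalence of Feynman categories, which is all that is needed).

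For the second claim, I would spell out how (\ref{reldiageq}) degenerates when $\Po=\final$. A morphism $\O\to\final$ in $\FF\text{-}\opcat$ is the unique one to the terminal object, and $f_{\ast}(\final)$ is again terminal in $\F'\text{-}\opcat$: either by noting that left adjoints need not preserve terminal objects in general, but here one computes directly from (\ref{colimeq}) that $f_{\ast}(\final)(X')=\colim_{\Comma{f}{X'}}\final\circ P$ is a colimit of a diagram all of whose values are the one-point set $\ast_{\C}$, hence is $\ast_{\C}$ itself (a colimit of the constant singleton diagram over any nonempty category is a singleton; and the comma categories here are nonempty by the hereditary condition, or one uses that terminal objects in $\F'\text{-}\opcat$ are computed pointwise), so $f_{\ast}(\final)$ is the terminal $\F'$-$\oper$. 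By the first claim applied to $\FF'$ and $f_{\ast}(\final)$, we get $\Fe'_{dec\,f_{\ast}(\final)}=\Fe'$. Substituting $\Po=\final$ into (\ref{reldiageq}) thus replaces the right-hand column $\Fe_{dec\Po}\to\Fe'_{dec f_{\ast}(\Po)}$ by $\Fe\xrightarrow{f}\Fe'$, replaces the vertical maps $\sigma_{dec}$, $\sigma'_{dec}$ by the forgetful functors $forget$, $forget'$ (since $\sigma_{dec}$ is the functor induced on decorations by $\O\to\final$, and mapping a decoration to the unique decoration is exactly forgetting it), and leaves the left-hand column $f^{\Op}$, $f^{\final}=f$ unchanged; the resulting square is precisely (\ref{maindiageq}).

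The only genuine point requiring care — the step I expect to be the main obstacle, though it is still routine — is justifying that $f_{\ast}(\final)$ is terminal, i.e.\ that the colimit in (\ref{colimeq}) of the constant singleton diagram is again a singleton. In the Cartesian target this is immediate from the fact that colimits over a nonempty connected category of a constant diagram return that constant value, together with nonemptiness of $\Comma{f}{X'}$; if one wants to avoid connectedness hypotheses one instead invokes that the terminal object of $\F'\text{-}\opcat=Fun_{\otimes}(\F',\C)$ is computed objectwise as the constant functor at $\ast_{\C}$ and that $f_{\ast}$, being a left adjoint to the restriction $f^{\ast}$, has $f_{\ast}(\final)$ determined by $\Hom(f_{\ast}(\final),\G)\cong\Hom(\final,f^{\ast}\G)$, which is a singleton for every $\G$, forcing $f_{\ast}(\final)$ to be terminal. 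Either way the proposition follows, and no further computation is needed. \qed
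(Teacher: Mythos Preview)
Your treatment of the first claim, $\FF_{dec\final}=\FF$, is correct and is exactly the unwinding the paper intends; the paper offers no proof beyond calling the proposition ``straightforward'' and appending a \qed.

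The second claim, however, contains a genuine error. Your adjunction argument that $f_*(\final)$ is terminal is backwards: the adjunction $f_*\dashv f^*$ gives $\Hom(f_*(\final),\G)\cong\Hom(\final,f^*\G)$, but terminality of $\final$ controls maps \emph{into} $\final$, not maps \emph{out of} it, so there is no reason $\Hom(\final,f^*\G)$ is a singleton. Left adjoints preserve initial objects, not terminal ones. Your colimit argument fails for the same reason: the colimit in $\Set$ of the constant singleton diagram over $(f\downarrow X')$ is $\pi_0(f\downarrow X')$, which is a point only when the comma category is connected, and nothing guarantees that. In fact the paper explicitly observes a few lines later that ``the pushforward \dots\ does not preserve final objects in general'' and introduces the notion of \emph{minimal extension} precisely to single out those $f$ for which $f_*(\final)$ is terminal. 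So the assertion you are trying to prove is false in general.

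The intended reading of ``special case'' is weaker than you assumed. Set $\Po=\final$ in (\ref{reldiageq}) and use the first claim to identify $\Fe_{dec\final}=\Fe$ and $\sigma_{dec}=forget$. This gives a commuting square with bottom-right corner $\Fe'_{dec\,f_*(\final)}$. Now post-compose with the forgetful functor $u:\Fe'_{dec\,f_*(\final)}\to\Fe'$: one checks directly that $u\circ\sigma'_{dec}=forget'$ and $u\circ f^{\final}=f$, so the resulting square is exactly (\ref{maindiageq}). Thus (\ref{maindiageq}) is recovered from the $\Po=\final$ instance of (\ref{reldiageq}) without ever needing $f_*(\final)$ to be terminal.
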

For more on terminal objects see \S\ref{terminalpar}.

\begin{proof}[Proof of Theorem \ref{diagramthm}] 

Define $f^{\Op}$ on objects as follows:
$$(X,a_X) \in \Fpair \mapsto (f(X),\mu_{X}(a_X)) \in \Fpairtwo$$
where $\mu$ is the natural transformation that is paired with the colimit object $f_{\ast}(\Op)(f(X))$, and $\mu_X$ is the specific arrow
$$\mu_X: \Op\circ P(X,id:f(X) \to f(X)) \to f_{\ast}(\Op)(f(X))$$
Notice then that $\mu_X(a_X) \in f_{\ast}(\Op)(f(X))$, and so $(f(X),\mu_X(a_X))$ is in fact an object of $\Fpairtwo$. For morphisms, we define $f^{\Op}$ by
$$\xymatrix{(X,a_X) \ar[dd]_{\ph} && (f(X),\mu_X(a_X)) \ar[dd]_{f(\ph)} \\
& \mapsto &\\
(Y,a_Y) && (f(Y),\nu_Y(a_Y))}$$

Here $\nu$ is the transformation associated to $f_{\ast}(\Op)(f(Y))$. To see that this is a viable definition, we must check that the right hand side is in fact an arrow in $\Fpairtwo$, that is, that $f(\ph)$ is an arrow from $f(X)$ to $f(Y)$ and that $f_{\ast}(\Op)(f(\ph))(\mu_X(a_X)) = \nu_Y(a_Y)$. Clearly $f(\ph)$ is an arrow from $f(X)$ to $f(Y)$. Now we have the colimit diagram

$$\xymatrix{\Op\circ P(X,f(\ph):f(X) \to f(Y)) \ar[rr]^{\Op(\ph)} \ar[rd]_{\nu_X}  \ar@{=}[ddd] && \Op \circ P(Y, id:f(Y) \to f(Y)) \ar[ld]^{\nu_Y} \\
& f_{\ast}(\Op)(f(Y)) & \\
& f_{\ast}(\Op)(f(X)) \ar@{.>}[u]& \\
\Op \circ P(f(X): id:f(X) \to f(X)) \ar[ru]^{\mu_X} &&}$$

Here the dotted arrow is $f_{\ast}(\Op)(f(\ph))$. Take $a_X \in \Op(X)$ at the bottom of the diagram. Following the arrows up gives $f_{\ast}(\Op)(f(\ph))(\mu_X(a_X))$. Following the equality, going right across the top, and then going down gives $\nu_Y(\Op(\ph)(a_X)) = \nu_Y(a_Y)$. Thus $f^{\Op}$ is well defined on arrows. That $f^{\Op}$ respects compositions and identities (and is therefore a functor) follows immediately from the fact that $f$ is a functor and the nature of composition and identities in decorated Feynman categories. That $f^{\Op}$ is monoidal again follows from the same being true for $f$ and the monoidal structure on decorated Feynman categories. Hence we have a monoidal functor $f^{\Op}:\Fpair \to \Fpairtwo$.

The construction of a functor $v^{\Op}:\Vpair \to \Vpairtwo$ is nearly identical. We must only remember that an object in $\Vpair$ is a pair $(\ast_w,a_{\ast_w})$ where $a_{\ast_w} \in \Op(\io(\ast_w))$. Thus we can define $v^{\Op}$ on objects by sending $(\ast_w,a_{\ast_w})$ to $(v(\ast_w),\mu_{\ast_w}(a_{\ast_w}))$ where $\mu_{\ast_w} = \mu_{\io(\ast_w)}$ from above (recall $\io(\ast_w)$ is an object of $\F$) and on morphisms by
$$\xymatrix{(\ast_w,a_{\ast_w}) \ar[dd]_{\ph} && (v(\ast_w),\mu_{\ast_w}(a_{\ast_w})) \ar[dd]_{v(\ph)} \\
& \mapsto &\\
(\ast_z,a_{\ast_z}) && (v(\ast_z),\nu_{\ast_z}(a_{\ast_z}))}$$
where again $\nu_{\ast_z} = \nu_{\io(\ast_z)}$ from above. The proof that $v^{\Op}$ is a well-defined functor is now identical to that of $f^{\Op}$.

The necessary compatibilities of $v^{\Op}$ and $f^{\Op}$ with all relevant structure will follow readily. Thus we have a functor between Feynman categories $(v^{\Op},f^{\Op}):\Fepair \to \Fepairtwo$. We will usually denote this functor simply by $f^{\Op}$.

Take $\Op,\Po \in \F$-$\Op ps_{\C}$ and consider a natural transformation $\sigma:\Op \to \Po$. Then $\sigma$ induces a functor $\sigma_{\F dec}:\Fpair \to \F_{dec\Po}$ via
$$(X,a_X) \mapsto (X,\sigma_X(a_X))$$
(here $\sigma_X$ is the component arrow $\sigma_X:\Op(X) \to \Po(X)$ of the natural transformation). For $\phi:(X,a_X) \to (Y,a_Y)$, we let $\sigma_{\F dec}(\phi) = \Po(\phi)$. Since
$$\Po(\phi)(\sigma_X(a_X)) = \sigma_Y(\Op(\phi)(a_X)) = \sigma_Y(a_Y)$$ 
(because $\sigma:\Op \to \Po$ is natural), $\Po(\phi)$ is in fact an arrow from $(X,\sigma_X(a_X)) = \sigma_{\F dec}(X,a_X)$ to $(Y,\sigma_Y(a_Y)) = \sigma_{\F dec}(Y,a_Y)$. By our usual abuse of notation, we will denote $\sigma_{\F dec}(\phi) = \Po(\phi)$ by simply
$$\phi:(X,\sigma_X(a_X)) \to (Y,\sigma_Y(a_Y))$$
as an arrow in $\F_{dec\Po}$. Respecting identities and composition follows from the commutativity diagrams for the naturality of $\sigma$.

The transformation $\sigma$ also induces a functor $\sigma_{\V dec}:\Vpair \to \V_{dec\Po}$. Let $\sigma_{\ast_v} := \sigma_{\io(\ast_v)}:\Op(\io(\ast_v)) \to \Po(\io(\ast_v))$. We define $\sigma_{\V dec}$ on objects of $\Vpair$ by
$$\sigma_{\V dec}(\ast_v,a_{\ast_v}) = (\ast_v,\sigma_{\ast_v}(a_{\ast_v}))$$
and on arrows by
$$\sigma_{\V dec}(\phi:(\ast_v,a_{\ast_v}) \to (\ast_w,a_{\ast_w})) = \phi:(\ast_v,\sigma_{\ast_v}(a_{\ast_v})) \to (\ast_w,\sigma_{\ast_w}(a_{\ast_w}))$$
where on the right-hand side $\phi$ follows our usual abuse of notation of the arrow  in $\V_{dec\Po}$ induced by $\phi:\ast_v \to \ast_w$ such that $\Po(\io(\phi))$ maps one decoration to the other. As above the functorial axioms follow from the diagrams for the natural transformation $\sigma$.

Now taking $\sigma_{dec} = (\sigma_{\V dec},\sigma_{\F dec}):\Fepair \to \Fe_{dec\Po}$ gives a functor between Feynman categories. The necessary compatibility conditions (e.g. with $\iopair$, $\io_{dec\Po}$) will follow readily.

We also have an induced functor $\sigma'_{dec}:\Fe'_{decf_{\ast}(\Op)} \to \Fe'_{decf_{\ast}(\Po)}$. Namely, the natural transformation $\sigma:\Op \to \Po$ induces a natural transformation $\sigma':f_{\ast}(\Op) \to f_{\ast}(\Po)$, and this second natural transformation induces the functor $\sigma'_{dec}$ in the same manner as above. The induced transformation $\sigma'$ comes from the colimit diagrams for $f_{\ast}(\Op)$ and $f_{\ast}(\Po)$. We have the diagram
$$\xymatrix{\Op\circ Pr(X,\phi:f(X) \to X') \ar[rr]^-{\Op\circ \Pr(\xi)} \ar[rd]^{\mu_X} \ar[ddd]_{\sigma_X} && \Op\circ Pr(Y,\psi:f(Y) \to X') \ar[ld]_{\mu_Y} \ar[ddd]^{\sigma_Y}\\
& f_{\ast}(\Op)(X') \ar@{.>}[ddd]^-{\sigma'_{X'}} &\\
&&\\
\Po\circ Pr(X,\phi:f(X) \to X') \ar[rr]^-{\Po\circ \Pr(\xi)} \ar[rd]^{\nu_X} && \Po\circ Pr(Y,\psi:f(Y) \to X') \ar[ld]_{\nu_Y}\\
& f_{\ast}(\Po)(X') &}$$
Here $\xi:X \to Y$ is such that
$$\xymatrix{f(X) \ar[rr]^{f(\xi)} \ar[rd]_{\phi} && f(X) \ar[ld]^{\psi}\\
& X' &}$$
The dotted vertical arrow is the component arrow $\sigma'_{X'}:f_{\ast}(\Op)(X') \to f_{\ast}(\Po)(X')$ of the natural transformation $\sigma'$.

We now claim that the diagram
$$\xymatrix{\Fepair \ar[rr]^{\sigma_{dec}} \ar[dd]_{f^{\Op}} && \Fe_{dec\Po} \ar[dd]^{f^{\Po}} \\
&&\\
\Fe'_{decf_{\ast}(\Op)} \ar[rr]^{\sigma'_{dec}} && \Fe'_{decf_{\ast}(\Po)} }$$
commutes. This follows directly from the previous colimit diagram:
$$\xymatrix{(X,a_X) \ar[rr]^{\sigma_X} \ar[dd]_{f^{\Op}} && (X,\sigma_X(a_X)) \ar[dd]^{f^{\Po}} \\
&&\\
(f(X),\mu_X(a_X)) \ar[rr]^-{\sigma'_{f(X)}} && (f(X),\sigma'_{f(X)}(\mu_X(a_X))) = (f(X),\nu_X(\sigma_X(a_X))) }$$
This diagram shows the commutativity for objects. Morphisms are similar.

\end{proof}
\section{Decorations and Terminal objects}
\label{terminalpar}
Notice that if $\C$ has a terminal object $pt$ then there is a terminal element $\final$  which is a monoidal unit
in $\F$-$\opcat_{\C}$ given by $\final(*_v)=pt$ for all elements and sending all maps to iterations of the composition $pt\otimes pt\to pt$.

\begin{thm}
\label{decothm}
If $\final$ is a terminal object for $\fops$ and $forget:\Fdeco\to \F$ is the forgetful functor,
then $forget^*(\final)$ is a terminal object for $\Fdeco\text{-}\opcat$.
We have that $forget_*forget^*({\final})=\O$.
\end{thm}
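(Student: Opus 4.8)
The plan is to verify the two assertions separately, starting with the easy one. First I would recall that $forget: \Fdeco \to \F$ is a morphism of Feynman categories, so by the general pushforward/pullback adjunction of \cite{feynman} recalled in \S\ref{pushforwardsec} we have $\adj{forget_*}{\Fdeco\text{-}\opcat}{\F\text{-}\opcat}{forget^*}$. Since right adjoints preserve limits and terminal objects are empty limits, $forget^*(\final)$ is terminal in $\Fdeco\text{-}\opcat$ as soon as $\final$ is terminal in $\F\text{-}\opcat$. Alternatively, and more concretely, one observes that $forget^*(\final) = \final \circ forget$ sends every object $(X,a_X)$ of $\Fdeco$ to $\final(X) = pt$ (or iterated tensor powers thereof), so it is literally the terminal $\O$ for $\Fdeco$ described in the opening paragraph of \S\ref{terminalpar}; I would include this one-line remark as a sanity check but rely on the adjunction for the formal proof.

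The substantive claim is $forget_* forget^*(\final) = \O$. By the colimit formula \eqref{colimeq} applied to $f = forget$, for every object $X \in \F$ we have
\begin{equation*}
forget_*(forget^*(\final))(X) = \colim_{\Comma{forget}{X}} forget^*(\final) \circ P = \colim_{\Comma{forget}{X}} (\final \circ forget \circ P),
\end{equation*}
where $P: \Comma{forget}{X} \to \Fdeco$ is the projection. Since $\final \circ forget \circ P$ sends every object of the comma category to $pt$ (on corollas, and to tensor powers of $pt$ in general, which by strong monoidality is again terminal), the diagram being colimited is the constant diagram at the terminal object of $\C$ on the components, and the colimit computes $\colim$ of a constant-ish diagram; the key point is that $\colim$ over $\Comma{forget}{X}$ of the terminal presheaf recovers $\O(X)$. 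Concretely: the objects of $\Comma{forget}{X}$ are pairs $((Y,a_Y), \phi: Y \to X)$ in $\F$, and a morphism to $((Y',a_{Y'}),\phi')$ is a $\Fdeco$-morphism $\psi:(Y,a_Y)\to(Y',a_{Y'})$ (i.e.\ $\O(\psi)(a_Y)=a_{Y'}$) commuting with the $\phi$'s. I would identify this comma category, or rather the relevant indexing, by restricting to the terminal vertex $\phi = id: X \to X$; the subcategory of objects of the form $((X,a_X), id)$ with $a_X$ ranging over $\O(X)$ is a discrete category with object set exactly $\O(X)$, and I would show it is final (cofinal) in $\Comma{forget}{X}$ so that the colimit over the whole comma category reduces to the colimit over this discrete set, namely $\coprod_{a_X \in \O(X)} pt = \O(X)$ in $\Set$ (and analogously via the concrete-category convention in general $\C$). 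The required cofinality is that for any $((Y,a_Y),\phi:Y\to X)$ there is a morphism in $\Comma{forget}{X}$ to some $((X,a'_X),id)$ and that the relevant slice is connected; the natural candidate is $a'_X := \O(\phi)(a_Y)$, for which $\phi$ itself becomes a $\Fdeco$-morphism $(Y,a_Y)\to(X,\O(\phi)(a_Y))$ over $X$.

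The main obstacle is exactly this cofinality/connectedness argument — making precise that the only data surviving the colimit is the element $a_X \in \O(X)$, and that two objects of $\Comma{forget}{X}$ lying over the same $a_X$ get identified while objects over different elements do not. Since the transition maps $\O(\psi)$ for isomorphisms $\psi$ are bijections, the components of $\Comma{forget}{X}$ are in bijection with $\O(X)$; once this is established the colimit is the constant diagram at $pt$ on each component, giving $\colim = \O(X)$ as a set, and one checks naturality in $X$ (the structure maps $\O(\phi)$ are reproduced by functoriality of the colimit) and strong monoidality to conclude $forget_*forget^*(\final) = \O$ as objects of $\F\text{-}\opcat_\C$. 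In the non-Cartesian case the same argument runs using the elements/underlying-set description fixed in the Convention, with $pt$ replaced by the monoidal unit and disjoint union by coproduct in $\C$; I would remark that this is where cocompleteness of $\C$ and compatibility of $\otimes$ with colimits (assumed in the Convention) is used.
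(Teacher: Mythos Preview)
Your proposal is correct and follows essentially the same route as the paper. Both use the right-adjoint-preserves-limits argument for the first claim and then compute the pushforward via the colimit formula \eqref{colimeq}, reducing to the objects $((X,a_X),id_X)$ of the comma category and obtaining $\coprod_{a_X\in\O(X)}\final(X)=\O(X)$. The paper compresses your cofinality discussion into the single (slightly imprecise) assertion that ``$X\stackrel{id}{\to}X$ is a final object'' in $\Comma{forget}{X}$; what is really meant, and what you make explicit, is that every object $((Y,a_Y),\phi)$ maps uniquely to $((X,\O(\phi)(a_Y)),id_X)$, so each connected component of the comma category has a terminal object of this form and the components are indexed by $\O(X)$.
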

\begin{proof}
The first statement either follows by calculation or the fact that $forget^*$ is a right adjoint and preserves limits.
For the second statement we give first the proof in the case that $\C$ has a terminal object $pt$ and that $\final(*_v)=pt$ for all $*_v\in \V$.
By the first statement $\final_{dec\O}=forget^*(\final)$ is again a final $\F$-$\oper$.
In this case, one can easily calculate that  $forget_*(\final_{dec\O})=\O$ by using the definition (\ref{colimeq}). Indeed,
we notice that in the comma category over any $X$, $X\stackrel{id}{\to} X$ is a final object and thus we see that $forget_* (\final_{dec\O})(X)=
\coprod_{(X,a_x)\in \Fdeco} \final (X)=\O(X).$
The general case follows analogously.
\end{proof}

\begin{df} The trivial  $\F$-$\oper$ is $\final$ defined to be the one for which $\final(*_v)=\unit$ and all the morphisms are 
identity or the natural morphism $\unit\otimes \unit\to \unit$.
\end{df}

\begin{prop}\label{trivialprop}
Let $\final$ be the trival $\F$-$\oper$. Then Theorem \ref{decothm} holds analogously.
\end{prop}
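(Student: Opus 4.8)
The statement we must establish is Proposition \ref{trivialprop}: when $\final$ is the \emph{trivial} $\F$-$\oper$, defined by $\final(*_v)=\unit$ with structure maps being identities or the canonical map $\unit\otimes\unit\to\unit$, the conclusion of Theorem \ref{decothm} still holds. That is, $forget^*(\final)$ is terminal in $\Fdeco\text{-}\opcat$, and $forget_*forget^*(\final)=\O$. The plan is to mirror the proof of Theorem \ref{decothm} verbatim, replacing the hypothesis ``$\C$ has terminal object $pt$ with $\final(*_v)=pt$'' by the observation that $\unit$ together with the unit constraints plays the structural role that $pt$ played, \emph{as far as the colimit computation is concerned}. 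Concretely, the first assertion is immediate and requires no change: $forget:\Fdeco\to\FF$ is a morphism of Feynman categories, $forget^*$ is the restriction functor on $\opcat$, it is a right adjoint to $forget_*$ (the pushforward of \eqref{colimeq}), and right adjoints preserve limits, hence terminal objects; since $\final$ is terminal in $\F\text{-}\opcat$, $forget^*(\final)$ is terminal in $\Fdeco\text{-}\opcat$. (One should note that the trivial operad $\final$ really is terminal in $\F\text{-}\opcat_\C$ for any symmetric monoidal $\C$: for any $\P\in\F\text{-}\opcat$ the unique map $\P\to\final$ is the unit map $\P(*_v)\to\unit$ composed appropriately, using that $\unit$ is initial-and-terminal among objects receiving $\P(X)$ via the unique monoidal map — this is the standard fact that $\unit$ with its multiplication is a terminal monoid object.)

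For the second assertion, $forget_*forget^*(\final)=\O$, I would run exactly the colimit argument from Theorem \ref{decothm}. By definition \eqref{colimeq},
\[
forget_*(forget^*(\final))(X)=\colim_{\Comma{forget}{X}} forget^*(\final)\circ P,
\]
where $\Comma{forget}{X}$ has objects $\big((X',a_{X'}),\ \phi:forget(X',a_{X'})=X'\to X\big)$ and $P(X',a_{X'},\phi)=X'$, so the diagram sends such an object to $forget^*(\final)(X')=\final(X')$. The key point, identical to the one used in Theorem \ref{decothm}, is that for the fixed object $X$ the sub-poset of those triples with $\phi=id_X$ — equivalently the objects $(X,a_X)$ with $a_X\in\O(X)$ mapping via $id$ — is \emph{final} (cofinal) in $\Comma{forget}{X}$: given any $(X',a_{X'},\phi:X'\to X)$, the morphism $\phi:(X',a_{X'})\to(X,\O(\phi)(a_{X'}))$ in $\Fdeco$ is a map in $\Comma{forget}{X}$ from it into such a terminal-shaped object, and one checks the connectedness condition for cofinality. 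Hence the colimit is computed over the discrete diagram indexed by the set $\{a_X : a_X\in\O(X)\}=\O(X)$, with every value equal to $\final(X)$. Now here is the one spot where triviality (rather than Cartesianness) is used: $\final(X)$. Since $\final(*_v)=\unit$ and $\final$ is strong monoidal with all structure maps the canonical ones, $\final(X)\cong\unit^{\otimes|X|}\cong\unit$ for every $X$ (iterating $\unit\otimes\unit\to\unit$), so the diagram is the constant diagram at $\unit$ indexed by the set $\O(X)$. Its colimit is $\coprod_{\O(X)}\unit$. When $\C=\Set$ this is literally $\O(X)$; in general, for the statement ``$=\O$'' to hold one reads $\coprod_{a\in\O(X)}\unit$ as the canonical presentation of $\O(X)$ as a coproduct of copies of the unit indexed by its elements, which is exactly the convention (concrete categories, faithful functor to $\Set$) fixed at the start of the paper. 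I would spell this out: in a concrete symmetric monoidal category the functor $S\mapsto\coprod_S\unit$ followed by the forgetful functor to $\Set$ recovers $S$ on underlying sets, and naturality in $X$ (the structure maps $\O(\phi)$ are reproduced because the cofinal terminal object $(X,a_X)$ transports along $\phi$ to $(X',\O(\phi)^{-1}(\cdots))$ — more precisely the maps in the cofinal subcategory induced by $\phi:X\to X$-shaped comparisons are exactly the $\O(\phi)$) gives the identification $forget_*forget^*(\final)\cong\O$ as $\F$-$\oper$s, not merely objectwise.

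\textbf{Main obstacle.} The only genuinely non-routine point is the last one: in Theorem \ref{decothm} the identification $\coprod_{(X,a_X)}\final(X)=\O(X)$ was clean because $\final(X)=pt$ really is a point and a coproduct of copies of $pt$ over a set is that set on the nose. For the trivial operad the corresponding fact is $\coprod_{a_X\in\O(X)}\unit\cong\O(X)$, which is only literally true when $\C=\Set$ (or more generally when $\unit$ is a separator and coproducts of $\unit$ represent underlying sets). I expect the honest statement of the proposition to be: \emph{under the standing convention that $\C$ is concrete, $forget_*forget^*(\final)=\O$}; the proof is then the cofinality argument above plus the remark that $\coprod_S\unit$ is the standard concrete model of the object with underlying set $S$. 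I would present the cofinality/final-object claim in full (it is the crux and is exactly as in Theorem \ref{decothm}), assert that $\final(X)\cong\unit$ by iterating the canonical map, and then invoke the concreteness convention to conclude; the phrase ``holds analogously'' in the proposition statement is, I believe, signalling precisely that the word-for-word translation of the Theorem \ref{decothm} proof goes through once one reads coproducts of $\unit$ in this standard way.
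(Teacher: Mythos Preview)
Your proposal contains a genuine error and also diverges from the paper's argument.

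\textbf{The error.} You assert parenthetically that the trivial $\F$-$\oper$ $\final$ (with $\final(*_v)=\unit$) is terminal in $\F\text{-}\opcat_\C$ for \emph{any} symmetric monoidal $\C$, claiming there is a unique ``unit map'' $\P(*_v)\to\unit$. This is false: $\unit$ is not terminal in $\C$ in general. Take $\C=k\text{-}\Vect$, where $\unit=k$; there are many linear maps $V\to k$, and nothing in the axioms of a monoidal natural transformation $\P\to\final$ forces the component $\P(*_v)\to k$ to be any particular functional. (Your appeal to ``$\unit$ is a terminal monoid object'' is also incorrect: in $k$-algebras, $k$ is initial, not terminal.) Consequently your deduction that $forget^*(\final)$ is terminal, via ``right adjoints preserve terminal objects,'' has no input to work with. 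The word ``analogously'' in the proposition does not mean the trivial operad is secretly terminal; it means the \emph{conclusions} of Theorem~\ref{decothm} (suitably read) persist with $\unit$ in place of $pt$.

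\textbf{The different route.} The paper does not rerun the colimit computation. Its one-line proof invokes the free/forgetful adjunction $free:\Set\rightleftarrows\C:forget$ for concrete $\C$, together with the identification $\unit=free(pt)$. The point is that Theorem~\ref{decothm} already holds on the nose in $\Set$ (where trivial and terminal coincide), and the adjunction transports that statement to $\C$: since $free$ is a left adjoint it preserves colimits, so $\coprod_{S}\unit=\coprod_{S}free(pt)=free\bigl(\coprod_{S}pt\bigr)=free(S)$. This is exactly the device needed to interpret the coproduct $\coprod_{\O(X)}\unit$ that you correctly flagged as the main obstacle. Your proposed resolution---reading $\coprod_{\O(X)}\unit$ as $\O(X)$ by the ``concreteness convention''---does not actually follow from concreteness: a faithful functor to $\Set$ does not make $\coprod_{U(\O(X))}\unit$ isomorphic to $\O(X)$ in $\C$ (again, think $\Vect$, where the left side is the free vector space on the underlying set of $\O(X)$). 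The adjunction is what makes the passage precise, and it is what the paper uses.
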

\begin{proof}
In the case that $\C$ is concrete, this follows by using the free and forgetful monoidal functor adjunction $free:\Set {\leftarrow\atop \rightarrow} \C:forget$. Indeed
 $\unit=free(pt)$, where $pt$ is a one point set.
 In the non-concrete case this is an exercise in adapting the presented arguments. 
\end{proof}

\begin{rmk}
This explains the observations of \cite{marklnonsigma}, see \S\ref{examplepar}.
\end{rmk}

The pushforward, as seen above, does not preserve final objects in general, as it is a left adjoint and not a right adjoint. However, this can happen in special contexts.
This is an extra condition that in examples has geometric relevance, see \S\ref{applicationsec}.

\begin{df}
We call a morphism of Feynman categories $i:\FF\to \FF'$ a minimal extension over $\C$ if  $\FF$-$\opcat_{\C}$ has a
 a terminal functor $\final$ and $i_*{\final}$ is a terminal object in $\FF'$-$\opcat_{\C}$.
 \end{df}
There are two examples that appear naturally. The first is $CycCom$ and \\
$ModCycCom$ for $\CCyclic \to \modular$ and the second is the decorated version
$\forget^*(CycAss)$ and $i^{\O}_*(\forget^*(CycAss))$.
\begin{prop}
\label{minextprop}
If $f:\Fe \to \Fe'$ is a minimal extension over $\C$, then $f^{\Op}:\Fepair \to \Fepairtwo$ is as well.
\end{prop}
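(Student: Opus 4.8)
The plan is to leverage Theorem \ref{decothm} (or really its analogue in Proposition \ref{trivialprop}) together with the commuting diagrams of Theorem \ref{diagramthm}. Assume $f:\Fe\to\Fe'$ is a minimal extension over $\C$, so that $\F\text{-}\opcat_{\C}$ has a terminal object $\final$ and $f_*(\final)$ is terminal in $\F'\text{-}\opcat_{\C}$. Fix an $\F$-$\oper$ $\Op$ in $\C$; we must show that $\Fpair\text{-}\opcat_{\C}$ has a terminal object and that $f^{\Op}_*$ sends it to a terminal object of $\F'_{dec\,f_*(\Op)}\text{-}\opcat_{\C}$.

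First I would identify the terminal object of $\Fpair\text{-}\opcat_{\C}$. By Theorem \ref{decothm}, applied to the Feynman category $\Fepair$ in place of $\FF$, the object $forget^*(\final_{\Fpair})$ is terminal in $\Fpair\text{-}\opcat_{\C}$, where $forget:\Fepair\to\Fe$ is the forgetful functor and $\final_{\Fpair}$ is the trivial/terminal $\F$-$\oper$; moreover $forget_*forget^*(\final_{\Fpair})=\Op$. (Here one uses that the trivial $\oper$ on $\Fe$ is the restriction-along-$forget$ of nothing extra — more precisely the terminal object of $\Fpair\text{-}\opcat_{\C}$ is computed by pulling back the terminal object along $forget$, exactly as in the proof of Theorem \ref{decothm}.) Call this terminal object $\mathcal{T}_{dec\Op}:=forget^*(\final)$.

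Next I would compute $f^{\Op}_*(\mathcal{T}_{dec\Op})$ and check it is terminal in $\F'_{dec\,f_*(\Op)}\text{-}\opcat_{\C}$. The key is the commuting square (\ref{maindiageq}): $f\circ forget = forget'\circ f^{\Op}$ as functors $\Fepair\to\Fe'$. Passing to pushforwards of $\opers$, the square (\ref{functoreq}) of Corollary \ref{functorcor} commutes, so $forget'_*\circ f^{\Op}_* = f_*\circ forget_*$ on $\Fpair\text{-}\opcat_{\C}$. Apply this to $\mathcal{T}_{dec\Op}$: we get
\begin{equation}
forget'_*\bigl(f^{\Op}_*(\mathcal{T}_{dec\Op})\bigr) = f_*\bigl(forget_*(\mathcal{T}_{dec\Op})\bigr) = f_*(\Op).
\end{equation}
On the other hand, since $\mathcal{T}_{dec\Op}$ is terminal in $\Fpair\text{-}\opcat_{\C}$ and pushforward along the forgetful functor of decorations behaves like the construction in Theorem \ref{decothm}, $f^{\Op}_*(\mathcal{T}_{dec\Op})$ is a candidate terminal object; to pin it down one shows $f^{\Op}_*(\mathcal{T}_{dec\Op}) = forget'^*(f_*(\final))$, using the commuting pushforward square again together with the fact that $f_*(\final)$ is terminal in $\F'\text{-}\opcat_{\C}$ by the minimal-extension hypothesis. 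Then Theorem \ref{decothm} applied on the primed side tells us that $forget'^*(f_*(\final))$, being the pullback of the terminal object $f_*(\final)$ along the forgetful functor, is terminal in $\F'_{dec\,f_*(\Op)}\text{-}\opcat_{\C}$, which is exactly the claim. I would present this last identification via a short diagram chase on colimits in the comma categories, exactly as in the computation $forget_*(\final_{dec\O})(X)=\coprod_{(X,a_X)}\final(X)=\O(X)$ in the proof of Theorem \ref{decothm}, noting that $X\xrightarrow{id}X$ is terminal in the relevant comma category.

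The main obstacle I anticipate is the bookkeeping in the middle step: verifying cleanly that the terminal object of the decorated category really is $forget^*$ of a terminal object and that this is compatible with pushforward, i.e. that $f^{\Op}_*\circ forget^* = forget'^*\circ f_*$ when both sides are evaluated on terminal objects. This is not formal adjoint-functor nonsense in general — $f^{\Op}_*$ and $f_*$ are left adjoints while $forget^*$ and $forget'^*$ are right adjoints, so the square of these four functors need not commute — and one must instead use the explicit colimit formula (\ref{colimeq}) and the fact that $X\xrightarrow{id}X$ is a terminal object of $\Comma{forget}{X}$, reducing the colimit to a coproduct indexed by the fiber, precisely as in Theorem \ref{decothm}. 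Once that compatibility is established on terminal objects, combining it with the minimal-extension hypothesis for $f$ and Theorem \ref{decothm} for the primed Feynman category finishes the argument.
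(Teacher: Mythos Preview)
Your proposal is correct and follows essentially the same route as the paper: identify the terminal object of $\Fpair\text{-}\opcat_{\C}$ as $forget^*(\final)$ via Theorem~\ref{decothm}, show that $f^{\Op}_*(forget^*(\final)) \cong forget'^*(f_*(\final))$, and conclude the latter is terminal by Theorem~\ref{decothm} on the primed side together with the minimal-extension hypothesis. One small difference worth noting: your intermediate computation $forget'_*\bigl(f^{\Op}_*(\mathcal{T}_{dec\Op})\bigr) = f_*(\Op)$ via the commuting square of left adjoints is a detour that is not actually used in the final argument, so you can safely drop it. More importantly, you are right to flag that the identity $f^{\Op}_*\circ forget^* = forget'^*\circ f_*$ mixes left and right adjoints and is \emph{not} formal; the paper simply asserts this by ``tracing both ways around the diagram,'' whereas your instinct to verify it directly via the colimit formula (\ref{colimeq}) and the terminality of $X\xrightarrow{id}X$ in the comma category is the honest way to close the gap.
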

\begin{proof}
Since $f$ is a minimal extension over $\C$, there exists a terminal functor $\final \in \F\text{-}\Op ps_{\C}$ such that $f_{\ast}(\final)$ is terminal in $\F'\text{-}\Op ps_{\C}$. Suppose $\final_{dec\Op}$ is terminal in $\Fpair\text{-}\Op ps_{\C}$. We want to show that $f_{\ast}^{\Op}(\final_{dec\Op})$ is terminal in $\Fpairtwo\text{-}\Op ps_{\C}$. By Theorem \ref{decothm}, $forget^{\ast}(\final)$ is terminal in $\Fpair\text{-}\Op ps_{\C}$ and $forget'^{\ast}(f_{\ast}(\final))$ is terminal in $\Fpairtwo\text{-}\Op ps_{\C}$. Since terminal objects are unique up to isomorphism, it follows that $forget^{\ast}(\final) \cong \final_{dec\Op}$, and further, the proof will be complete if we can show that $f_{\ast}^{\Op}(\final_{dec\Op})$ is isomorphic to $forget'^{\ast}(f_{\ast}(\final))$. We have the diagram 
\begin{equation}
\xymatrix{\Fepair \ar[r]^{f^{\Op}} \ar[d]_{forget} & \Fe'_{dec\, f_{\ast}(\Op)} \ar[d]^{forget'} \\
\Fe \ar[r]^f & \Fe'}
\end{equation}
of Theorem \ref{diagramthm}, which gives the diagram
\begin{equation}
\xymatrix{\Fpair\text{-}\opcat_{\C} \ar@/^/[r]^{f^{\Op}_*}\ar@/_/[d]_{forget_*} & \F'_{dec\,f_{\ast}(\Op)}\text{-}\opcat_{\C}   \ar@/^/[l]^{f^{\Op }*}\ar@/^/[d]^{forget'_*} \\
\F\text{-}\opcat_{\C} \ar@/^/[r]^{f_*}\ar@/_/[u]_{forget^*} & \F'\text{-}\opcat_{\C}\ar@/^/[u]^{forget'^*}\ar@/^/[l]^{f^*} }
\end{equation}
of Corollay \ref{functorcor}. The terminal functor $\final$ lives in the bottom left-hand corner. Tracing both ways around the diagram gives
$$forget'^{\ast}(f_{\ast}(\final)) = f_{\ast}^{\Op}(forget^{\ast}(\final)).$$
As stated, $forget^{\ast}(\final) \cong \final_{dec\Op}$, and so we now have
$$forget'^{\ast}(f_{\ast}(\final)) \cong f_{\ast}^{\Op}(\final_{dec\Op}).$$
Therefore $f^{\Op}_{\ast}(\final_{dec\Op})$ is terminal in $\Fpairtwo\text{-}\Op ps_{\C}$.
\end{proof}

\begin{prop}[Minimal Extension Criterion]
\label{critprop}
Consider $i:\FF\to \FF'$, which is essentially surjective and for each $X\in \FF'$ the comma category $\Comma{i}{X}$ has a terminal object,
then $i$ is a minimal extension.
\end{prop}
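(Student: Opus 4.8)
The plan is to verify the two defining conditions of a minimal extension for $i$: namely that $\FF$-$\opcat_{\C}$ has a terminal functor, and that its pushforward under $i_*$ is terminal in $\FF'$-$\opcat_{\C}$. For the first, I would invoke the observation preceding Theorem \ref{decothm}: if $\C$ has a terminal object then $\final$ with $\final(*_v) = pt$ and all morphisms sent to iterated compositions of $pt\otimes pt\to pt$ is a terminal object of $\FF$-$\opcat_{\C}$; in the concrete/non-concrete variants one uses the trivial $\F$-$\oper$ as in Definition preceding Proposition \ref{trivialprop}. So the content is entirely in the second condition.

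\textbf{Computing the pushforward.} The key step is to evaluate $i_*(\final)(X')$ for each $X'\in \FF'$ using formula (\ref{colimeq}):
$$i_*(\final)(X') = \colim_{\Comma{i}{X'}} \final\circ P.$$
By hypothesis $\Comma{i}{X'}$ has a terminal object, say $(X_0, \varphi_0: i(X_0)\to X')$. A colimit over a category with a terminal object is computed at that terminal object, so $i_*(\final)(X') \cong \final(P(X_0,\varphi_0)) = \final(X_0)$. Now $\final(X_0)$ is a tensor power of $pt$ (or of $\unit$), hence is itself terminal (resp.\ the monoidal unit) in $\C$ by the hereditary decomposition and the fact that $\final$ sends decompositions to iterated unit multiplications. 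Therefore $i_*(\final)(X')$ is terminal in $\C$ for every $X'$. One then checks that the structure morphisms of $i_*(\final)$ as an $\F'$-$\oper$ are forced — maps between terminal objects, i.e.\ iterated $pt\otimes pt\to pt$ — which is exactly the description of the terminal functor in $\FF'$-$\opcat_{\C}$. Essential surjectivity of $i$ is what guarantees this determines $i_*(\final)$ on all of $\F'$ consistently and that no other object type is missed; more precisely it ensures the colimit description above covers every isomorphism class of objects of $\F'$.

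\textbf{Main obstacle.} The routine part is the colimit-at-a-terminal-object computation. The step requiring the most care is checking that the resulting functor $i_*(\final)$ genuinely \emph{is} the terminal object of $\FF'$-$\opcat_{\C}$ rather than merely being objectwise terminal in $\C$: one must confirm that the monoidal coherence data and the action on morphisms of $i_*(\final)$ agree with those of the canonical $\final'$, which is where essential surjectivity of $i$ (ensuring compatibility across all of $\F'$) and the hereditary condition (ensuring morphisms are generated by the many-to-one pieces, on which $i_*(\final)$ is pinned down) are both used. Once $i_*(\final)\cong \final'$ is established, the definition of minimal extension is satisfied and the proof is complete.
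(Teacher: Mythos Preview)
Your proposal is correct and takes essentially the same approach as the paper: the paper's entire proof is the single sentence ``Using equation (\ref{colimeq}) proves the result,'' and what you have written is precisely the unpacking of that sentence---evaluate the colimit at the terminal object of $\Comma{i}{X'}$ to see that $i_*(\final)(X')\cong \final(X_0)$. Your discussion of the ``main obstacle'' is more cautious than the paper bothers to be, and your explanation of the role of essential surjectivity is slightly muddled (the colimit formula already defines $i_*(\final)$ on every $X'$; essential surjectivity is not needed for coverage once each comma category is assumed nonempty via its terminal object), but none of this affects correctness.
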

\begin{proof}
Using equation (\ref{colimeq}) proves the result.
\end{proof}
\section{Examples}
\label{examplepar}

We will give a few examples. Going back to Kontsevich \cite{threegeometriesGefandseminars} and later picked up in \cite{ConanVogtman}
the examples of the cyclic operads $CycCom$, $CycAss$, and $CycLie$ are the  operads furnish  new notions of commutative geometry. The consequences were also 
further analyzed in \cite{KLP,KP,KWZ}.
The example of CycAss is in other guise discussed in detail in \cite{marklnonsigma}. We assert that the same is now possible for
the infinity versions, without much ado. Finally, we answer positively the question of Markl, whether or not there is a generalization to the dihedral case,
by simply observing that one can indeed decorate with the cyclic dihedral operad. As mentioned in the introduction, the non--Sigma cases govern
oriented surfaces while the dihedral  cases govern non--oriented surfaces.

Finally, we show that all the notions of operadic--types that have been introduced so far in the literature and treated in \cite{feynman} are 
obtained from the basic Feynman category $\GG$ of graph morphisms of aggregates of corollas by decorating as described in this paper and by restriction.

 Collecting these results, we obtain:
 \begin{prop} All the examples in tables 1 and 2 are given by decorating the Feynman category $\GG$, see \cite{feynman} \qed.
 \end{prop}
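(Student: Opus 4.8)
The plan is to treat "all the examples in tables 1 and 2" systematically by recognizing that each operadic type in the tables is specified by two pieces of data relative to the ambient graph Feynman category $\GG = (\Crl,\Agg,\imath)$: a decoration by some operad (or cyclic operad, etc.) and a restriction to a subcategory cutting down the allowed underlying graphs of morphisms. So the proof is really a finite bookkeeping argument: go through the tables entry by entry and exhibit, for each, an $\F$-$\oper$ $\O$ (valued in $\Set$ or in $k$-$\Vect$ as needed) together with a subcategory inclusion so that the decorated-and-restricted category reproduces the listed Feynman category up to equivalence. The conceptual content needed for this has already been established: Theorem~\ref{theorem1} guarantees $\GG_{dec\O}$ is again a Feynman category, the non-Cartesian constructions of \S\ref{noncart1sec} handle the $Lie$-type decorations that live in $k$-$\Vect$, and Theorem~\ref{decothm} (with Proposition~\ref{trivialprop}) explains why terminal-object phenomena, such as those forcing the non-$\Sigma$ variants, appear.

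First I would fix the master example: the non-$\Sigma$ (planar) types arise by decorating with the associative operad $Assoc$ (respectively its cyclic and modular analogues), using the observation already recorded in \S\ref{examplepar} that decorating $\operads$ by $Assoc$ yields the Feynman category for non-$\Sigma$ operads, and that the same works for the cyclic and modular versions with the appropriate flavor of the associative operad. Second I would handle the $\Sigma$ (symmetric) types by decorating with the \emph{trivial} operad $\final$, invoking Proposition~\ref{trivivlprop}/\ref{trivialprop}: by Theorem~\ref{decothm}, $\GG_{dec\final} = \GG$, so these entries need only the restriction half of the recipe — passing to the subcategory of $\Agg$ whose morphisms have the prescribed underlying graphs (trees for operads, genus-zero connected graphs for cyclic operads, connected graphs for modular operads, and so on). Third, for the geometric/combinatorial decorations — $CycCom$, $CycAss$, $CycLie$, and the dihedral cyclic operad — I would appeal to the construction of \S\ref{noncart1sec} for the $Lie$ case (which genuinely needs $\C = k$-$\Vect$) and the Cartesian construction otherwise, noting in each case that the decoration-by-elements description reproduces the classical "decorate every vertex by an element of $Cyc\mathcal{P}$" picture of Kontsevich and of \cite{ConanVogtman}.

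The main obstacle is not any single hard lemma but the uniformity of the bookkeeping: I must verify that in every table entry the \emph{order} of operations (decorate then restrict, versus restrict then decorate) does not matter up to equivalence, and that the decorating operad really is an honest $\F$-$\oper$ for the relevant $\F$ — i.e. that it is strong symmetric monoidal on the nose and compatible with the graph-substitution composition in $\Agg$. Concretely, one checks that restriction to a subcategory $\F' \subseteq \F$ closed under the decomposition diagrams commutes with $(-)_{dec\O}$ because the morphism condition $\O(\phi)(a_X) = a_Y$ is imposed flag-by-flag via \eqref{morphdecompeq2}, so it is insensitive to whether one first discards graphs or first imposes the decoration constraint. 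Once this commutation is in hand, each entry of Tables~1 and~2 is matched by naming its pair (operad, subcategory), and the proposition follows; the detailed matching is carried out in the remainder of \S\ref{examplepar}.
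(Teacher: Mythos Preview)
Your overall framework---each table entry is a decoration of $\GG$ followed by a restriction to a subcategory---is exactly the paper's approach, and your handling of the non-$\Sigma$ and dihedral entries via $Ass$, $CycAss$, $ModAss$, $CycDihed$ matches \S\ref{examplepar} on the nose.

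There is, however, a genuine gap in your second step. You assert that the symmetric types in Table~1 ``need only the restriction half of the recipe'' with trivial decoration, listing operads as trees, modular operads as connected graphs, and so on. This is wrong for most of Table~1. Modular operads $\modular$ require the \emph{genus marking}, which is not a restriction but a decoration by the $\GG$-$\oper$ $\N$ constructed in \S5.4.2 (with $\N(\ast_S)=\mathbf{N}_0$ and the morphism action adding the Euler-characteristic defect). Likewise the rooted structure for $\operads$ and the direction structure for $\props$, $\properads$, $\dioperads$ and their wheeled variants are not restrictions at all: they are decorations by the $\Z/2\Z$-valued flag-labelling $\oper$ of \S5.4.1, followed by the restrictions described there (one output per vertex for roots, in/out matching on edges for direction). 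Your trivial-decoration recipe would produce only $\GG^{ctd}$ (the \emph{unmarked} modular row) and $\CCyclic$, and would leave the majority of Table~1 unaccounted for.

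The fix is to add a fourth bullet to your plan covering exactly the content of \S5.4: exhibit the $\Z/2\Z$-set $\oper$ and the $\N$-$\oper$ as honest $\GG$-$\opcat$ objects, check that decorating by them and then restricting as in Table~2 reproduces $\FF^{dir}$, $\FF^{rooted}$, $\FF^{genus}$, and then compose these with the graph-shape restrictions (trees, connected, no directed loops, no parallel edges) to recover each row of Table~1. Once that is done, your commutation-of-decorate-and-restrict observation and the deferral to \S\ref{examplepar} complete the argument.
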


\subsection{Kontsevich's three geometries}
We use the notation of tables 1 and 2.

\subsubsection{Com, or trivially decorated}
The operad $CycCom$, the operad for cyclic commutative algebras is the terminal/trivial object in $\CCyclic$-$\opcat$. 
Thus by Proposition \ref{trivialprop},
we have that $\operads_{decCom}=\operads$. The analogous statement holds for $\CCyclic$. Indeed,
there is a forgetful functor $\operads\to\CCyclic$ and the pull--back of  $CycCom$ is $Com$ and hence 
$\CCyclic_{decCycCom}=\CCyclic$. Finally using the inclusion $i:\CCyclic\to \modular$ means that the modular envelope
$i_*(Com)$ is a modular operad. Tracing around the trivially decorated diagram (\ref{maindiageq}), we see that
this is again a final operad. Indeed this is the content of Proposition \ref{minextprop}.

\subsubsection{Ass-decorated, aka.\ Non--Sigma, aka.\ non--planar}
Likewise, we can regard the cyclic associative operad, $CycAss$.  
The pull back of $CycAss$ under $forget:\operads\to\CCyclic$ is the associative operad $Ass$.
Now $\operads_{dec\, Ass}=\operads^{pl}$ is the Feynman category for non--Sigma operads. Indeed, the 
elements of $Ass(*_s)$ are the linear orders on $S$, which means that we are dealing with planar corollas as objects.
Likewise, for the morphisms the condition that $\phi(a_X)=a_Y$ means that the trees are also planar.
The story for cyclic operads is similar $\CCyclic_{dec CycAss}=\CCyclic^{pl}$.

Things are more interesting in the modular case. In this case, we have \\
$i_*(CycAss)=:ModAss$ as a possible decoration.
Indeed using this, we recover the definition of \cite{marklnonsigma} of non--sigma modular operads, which is the special case of a brane labelled  c/o system, with trivial closed part and one color of  \cite{KP}[Appendix A.6], see also \cite{KLP}
the appendix of \cite{postnikov} and \cite{marklnonsigma} for details about the correspondence between stable or almost ribbon graphs and surfaces.
$\modular^{pl}:=\modular_{dec \, ModAss}$.

Here we can understand these constructions in a more general framework. 
First, the diagram considered in   \cite{marklnonsigma} is exactly diagram (\ref{functoreq}). Then the fact that  the non--Sigma modular envelope
of $CycAss$ is terminal is obvious from Theorem \ref{decothm} and Proposition \ref{minextprop}. The key observations
are that the terminal object of $\CCyclic^{pl}$  pushed forward is indeed $CycAss$ and the $ModAss$, which is the pushforward
of the terminal object of  $\modular^{pl}$.

\subsubsection{Lie, etc. or graph complexes}
One of the most interesting generalizations is that of Lie or
in general of Kontsevich graph complexes. Here notice that $Ass,Com$ and $Lie$ are all three cyclic operads, so that they all can be used to decorate the Feynman category for cyclic operads. For $Lie$ it is important that we can also work over $k$--Vect. 
Thus, answering a question of Willwacher, indeed there is a Feynman category for the Lie case.

To go to the case of graph complexes, one needs to first shift to the odd situation and then  take colimits as described in detail in \cite{feynman}, see especially section 6.9 of {\it loc.\ cit.}.

\subsection{Infinity versions} Another interesting example stems from the infinity versions of the Ass, Com and Lie. We will deal with this in a subsequent paper.

\subsection{Dihedral or non--oriented version} In \cite{marklnonsigma} a question was asked about the possibility of a generalization from the case of non--Sigma modular to the dihedral case. In {\it loc.\ cit.} one can find a definition of the cyclic dihedral operad $CycDihed$ first defined in \cite{Braun}.
This is by definition a cyclic operad, and hence we can regard the diagram (\ref{maindiageq}) with $\FF=\CCyclic$, $\O=CycDihed$ and 
$\FF'=\modular$. Theorem \ref{decothm} then gives all the desired features.

\subsection{Decorating and restricting $\GG$}
\label{decosec}

\subsubsection{Flag labelling, direction and roots as a decoration}
Recall that $*_S$ is the one vertex graph with flags labelled by $S$ and these are the objects of $\V=\Crl$ for $\GG$.
For any set $X$ introduce the following $\GG$-$\oper$: $X(*_S)=X^{S}$. The compositions are simply given by restricting to the target flags.

Now let the set $X$ have an involution $\bar{}:X\to X$. 
Then a natural subcategory $\FF_{decX}^{dir}$ of $\GG_{decX}$ is given by the wide subcategory whose morphisms additionally satisfy that only flags marked by 
elements $x$ and $\bar x$ are glued and then contracted. That is the underlying graph has edges whose two flags are labelled by this type of element.
 In the notation of  \cite{borisovmanin} and \cite{feynman}:  $X(f)=\overline{\imath_{\phi}(f)}$.
 If $X$ is pointed by $x_0$, there is the subcategory of $\GG_{decX}$ whose objects are those generated by $*_S$ with exactly one flag labelled by $x_0$
 and where the restriction on graphs is that for the underlying graph additionally, each edge has one flag labelled by $x_0$.

Now if $X=\Z/2\Z=\{0,1\}$ with the involution $\bar 0=1$, then we can call $0$ ``out'' and $1$ ``in'', then we obtain the category of directed graphs $\GG_{dec\Z/2Z}$.
Furthermore, if $0$ is the distinguished element, we get the rooted version. 
Examples are listed in Table \ref{table2}.

\subsubsection{Genus decoration}
Let $\N$ be the $\GG$-$\oper$ which on objects of $\V$ has constant value the natural numbers $\N(*_S)=\mathbf{N_0}$. On morphisms $\N$ is defined to behave like the genus marking.
That is for $\phi:X\to *_S$, we define $\N(\phi):\N(X)=\mathbf{N_0}^{|X|}\to \mathbf{N_0}=\N(*_S)$
 as the concatenation $\mathbf{N_0}^{|X|}\stackrel{\sum}{\to} \mathbf{N_0}\stackrel{+\bar\gamma(\phi)}{\to}\mathbf {N}_0$  where $\bar\gamma(\phi)$ equals one minus the Euler characteristic of the graph underlying $\phi$. If this graph is connected this is just 
first Betti number also sometimes called the genus. This coincides with the description in \cite{feynman}, Appendix A.
Hence, if $\FF$ is a subcategory of $\GG$, then the genus marked version  is just $\FF_{dec\N}$. 
Examples are listed in Table \ref{table2}.

\subsection{Concrete Applications}
\label{applicationsec} 
\subsubsection{Summary of results applied to non--sigma Modular case}
 We now discuss the square
 
 \begin{equation}
\label{modulardiag}
\xymatrix{\F_{dec\, CycAss}=\CCyclic^{\neg \Sigma} \ar[r]^{i^{CycAss}} \ar[d]_{forget} & \modular_{dec\, i_{\ast}(CycAss)} =\modular^{\neg \Sigma}\ar[d]^{forget} \\
\CCyclic \ar[r]^i & \modular}
\end{equation}
and point out how to find the results of \cite{marklnonsigma} which we generalized. In this paragraph, we will uss the notation of {\it loc.\ cit.}

\begin{enumerate}
\item The commutative square exists simply by Theorem \ref{theorem1}.
\item By Theorem \ref{decothm}: On the left side,  if $*_C$ is final for $\CCyclic$ and hence $forget^*(*_C)=\underline{*}_C$ 
is final for $\CCyclic^{\neg \Sigma}$ . The pushforward $forget_*(\underline{*}_C)=CycAss$.
\item Again. by Theorem \ref{decothm}: On the right side,  if $*_M$ is final for $\modular$ and hence $forget^*(*_M)=\underline{*}_M$ 
is final for $\modular^{\neg \Sigma}$. The pushforward $forget_*(\underline{*}_M)=ModAss$.
\item The inclusion $i$ is a minimal extension. This is a fact explained by basic topology.
Namely gluing together polygons in their orientation by gluing edges pairwise yields all closed oriented surfaces, see e.g.\ \cite{munkres}. In the current understanding, this procedure guarantees the condition of Proposition \ref{critprop}.
\item  By Proposition \ref{minextprop}: $i^{CycAss}$ is also a minimal extension,
which explains why indeed the pushforward of the terminal $\oper$ is up to that point is still terminal.  It also reflects the fact that not gluing all edges pairwise, but preserving orientation, does yield all surfaces with boundary.
\end{enumerate}

 \subsubsection{Summary of results applied to the dihedral case}
Since there are operad morphisms $CycCom\leftarrow CycAss\to CycDihed$, where the left is the terminal object and the right arrow is the inclusion,
there is a diagram by Theorem \ref{diagramthm}:

 \begin{equation}
\label{dihedraldiag}
\xymatrix{\modular^{Dihed}\ar[d]_{forget}&\CCyclic^{Dihed} \ar[l]_{i^{CycDihed}}\ar[d]_{forget}&\CCyclic^{\neg \Sigma} \ar[r]^{i^{CycAss}} \ar[l]_{\sigma_{dec}}\ar[d]_{forget} & \modular^{\neg \Sigma}\ar[d]_{forget} \ar@/_3pc/[lll]_{\sigma'_{dec}}\\
\modular&\CCyclic\ar[l]_i \ar@{=}[r]&\CCyclic \ar[r]^i& \modular}
\end{equation}
where as decorated Feynman categories $\modular^{Dihed}=\modular_{dec\, i_{\ast}(CycDihed)}$, $\CCyclic^{Dihed}=\\
 \F_{dec\, CycDihed}$, $\CCyclic^{\neg \Sigma}=\F_{dec\, CycAss}$
and $\modular^{\neg \Sigma}=\modular_{dec\, i_{\ast}(CycAss)} $

Again the criterion of  Proposition \ref{critprop} is satisfied also for the bottom inclusions and hence the top pushforwards also respect the terminal element. 
Geometrically, doing the gluings  with orientation switches yield also the non-orientable surfaces (with boundary).
Finally, for terminal objects Theorem \ref{decothm} applies again.
And hence indeed the push forward of the terminal $\oper$ for $\modular^{Dihed}$ is the decoration $\O=i_*(CycDihed)=Mod(CycDihed)$.

\vfill

\pagebreak

\section*{Tables}
\begin{table}[h]
\begin{tabular}{llll}
$\FF$&Feynman category for&condition on graphs additional decoration&\\
\hline
$\operads$&operads&rooted trees\\
%$\operads^{odd}$&odd operads&rooted trees  + orientation of set of edges& odd pre-Lie\\
%$\operads^{pl}$&non-Sigma operads &planar rooted trees & all $\circ_i$ operations\\
$\operads_{mult}$&operads with mult.&b/w rooted trees.\\
$\CCyclic$&cyclic operads&trees& \\
%$\CCyclic^{odd}$&odd cyclic operads &trees + orientation of set of edges& odd Lie&\\
%&&++ orientation of the set of edges\\
$\GG$&unmarked nc modular operads& graphs \\
$\GG^{ctd}$&unmarked  modular operads&connected graphs \\
$\modular$&modular operads&connected + genus marking \\
$\modular^{nc,}$&nc modular operads &genus marking \\
%$\modular^{odd}$&$\K$--modular&connected + orientation on set of edges & odd dg Lie \\
%&&+ genus marking&\\
%$\modular^{nc,odd}$&nc $\K$-modular& orientation on set of edges & BV\\
%&&+ genus marking&\\
$\dioperads$&dioperads&connected directed graphs w/o directed\\
&&loops or parallel edges\\
$\props$&PROPs&directed graphs w/o directed loops\\
$\properads$&properads&connected directed graphs w/o directed loops\\
$\dioperads^{\circlearrowleft }$&wheeled dioperads&directed graphs w/o parallel edges \\
%$\dioperads^{\circlearrowleft odd}$&odd wheeled dioperads&directed graphs w/o parallel edges &BV\\
%&&+ orientations of edges&\\
$\props^{\circlearrowleft,ctd}$& wheeled properads&connected directed graphs w/o parallel edges \\
%$\props^{\circlearrowleft,ctd, odd}$&odd wheeled properads&connected directed graphs w/o parallel edges &odd Lie admissible\\
%&&+ orientations of edeges&+extra differential\\
$\props^{\circlearrowleft}$& wheeled props &directed graphs w/o parallel edges \\
%$\props^{\circlearrowleft, odd}$&odd wheeled props &directed graphs w/o parallel edges &BV\\
%&&+ orientations of edeges&\\
\end{tabular}
\caption{\label{table1}List of Feynman categories with conditions and decorations on the graphs}
\end{table}
\end{ex}

\begin{table}[h]
\begin{tabular}{llll}
$\FFdeco$&Feynman category for&decorating $\O$&restriction\\
\hline
$\FF^{dir}$&directed version&$\Z/2\Z$ set&edges have input and output flag\\
$\FF^{rooted}$&root&$\Z/2\Z$ set& vertices have one output flag.\\
$\FF^{genus}$&genus marked&$\N$&\\[2mm]
$\operads^{\neg\Sigma}$&non-Sigma-operads&$Ass$&\\
$\CCyclic^{\neg\Sigma}$&non-Sigma-cyclic operads&$CycAss$&\\
$\modular^{\neg\Sigma}$&non--Signa-modular&$ModAss$&\\
$\CCyclic^{dihed}$&dihedral&$Dihed$&\\
$\modular^{dihed}$&dihedral modular&$ModDihed$&
\end{tabular}
\caption{\label{table2}List of decorates Feynman categories with decorating $\Op$ and possible restriction}
\end{table}

\vfill

\pagebreak

\bibliography{fcbib}
\bibliographystyle{halpha}
\end{document}